\theoremstyle{plain}
\newtheorem{theorem}{Theorem}[section]
\newtheorem{lemma}[theorem]{Lemma}
\newtheorem{proposition}[theorem]{Proposition}
\newtheorem{corollary}[theorem]{Corollary}
\theoremstyle{definition}
\newtheorem{definition}[theorem]{Definition}
\newtheorem{remark}[theorem]{Remark}
\begin{document}
\title[Functional monadic ortholattices]{Functional monadic ortholattices and locally finite $\sigma$-free polyadic ortholattices}
\author{Chun-Yu Lin $^{1,2}$, Joseph McDonald $^{1,3}$}
\address{$^1$ Institute of Computer Science, Czech Academy of Science, Prage, Czech Republic}
\address{$^2$ Department of Logic, Charles University, Prague, Czech Republic}
\address{$^3$ Department of Philosophy, University of Alberta, Edmonton, Canada}
\email{lin@cs.cas.cz}
\email{jsmcdon1@ualberta.ca}
\date{March 2025}
\maketitle
\begin{abstract}
    In this paper, we show that every monadic ortholattice is isomorphic to a functional one, thereby resolving a recent question posed by Harding. We then study certain substitution-free reducts of the polyadic ortholattices, which we call \emph{locally finite $\sigma$-free polyadic ortholattices}, and provide an analogous functional representation result. 
\end{abstract}

\section{Introduction}

Monadic algebras were introduced by Halmos \cite{halmos1} as algebraic models of the classical predicate calculus in a single variable. A \emph{monadic algebra} is a Boolean algebra $\langle B;\wedge,\vee,\neg,0,1\rangle$ equipped with a closure operator $\exists\colon B\to B$, known as a \emph{quantifier}, whose closed elements form a Boolean sub-algebra. A standard construction of a monadic algebra involves starting with a non-empty set $X$, a complete Boolean algebra $B$, and taking the function space $B^X$. Then, by defining the relevant Boolean algebra operations point-wise and defining the associated quantifier by: \[(\exists f)(x)=\bigvee\{f(x):x\in X\}\] for each $f\in B^X$, one obtains a monadic algebra, known as the \emph{full functional monadic algebra}. A monadic algebra is then called a \emph{functional monadic algebra} provided it is a subalgebra of a full functional monadic algebra. Halmos \cite{halmos1} proved a representation theorem for monadic algebras by demonstrating that every monadic algebra is isomorphic to a functional one. Closely related to the monadic algebras are the polyadic and cylindric algebras. Polyadic algebras were introduced by Halmos \cite{halmos1} as algebraic models of the full classical predicate calculus without equality. A polyadic algebra is a quadruple $\langle B;I,\exists,\sigma\rangle$ consisting of a Boolean algebra $B$, a set $I$, a function $\exists$ from the powerset $\wp(I)$ of $I$ to all Boolean endomorphisms $\text{End}(B)$ on $B$, and a function $\sigma$ from the function space $I^I$ to $\text{End}(B)$, all of which is subject to certain conditions. Cylindric algebras on the other hand were introduced by Henkin and Tarski \cite{henkin} and by Henkin, Monk, and Tarski \cite{tarski1,tarski2} as algebraic models of the classical predicate calculus with equality. A cylindric algebra $\langle B;(\exists_i)_{i\in I},(\delta_{i,k})_{i,k\in I}\rangle$ consists of a Boolean algebra $B$, a family $(\exists_i)_{i\in I}$ of pairwise commuting quantifiers on $B$, and a family of $(\delta_{i,k})_{i,k\in I}$ of constants, known as the \emph{diagonal elements}, satisfying certain conditions. Functional counterparts for both these classes of algebras are defined somewhat analogously to the case of monadic algebras and functional representation theorems for them are known (see \cite{halmos1, henkin1}).  

The first aim of this paper is to study the functional representation theory of monadic ortholattices. Ortholattices form generalizations of Boolean algebras in the sense that their bounded lattice reducts are not in general distributive. Ortholattices are interesting within the setting of non-classical logic as they form algebraic models of orthologic \cite{birkhoff, chiara, goldblatt2}. Monadic ortholattices were first introduced by Janowitz \cite{janowitz}, and then later studied by Harding \cite{harding1} as well as Harding, McDonald, and Peinado \cite{harding2}. In the second section, we introduce functional monadic ortholattices as generalizations of the functional monadic algebras and demonstrate that every monadic ortholattice is isomorphic to a functional one. This resolves a recent question posed by Harding \cite{harding1}. Our methods for proving this functional representation theorem use those developed by Bezhanishvili and Harding \cite{bezhanishvili} in the setting of functional monadic Heyting algebras, and make use of the well-known facts that the variety of ortholattices satisfy certain amalgamation properties \cite{bruns, miyazaki}, and are closed under MacNeille completions \cite{maclaren} and hence admit of regular completions \cite{day}. We then study diagonal-free ($\delta$-free) cylindric ortholattices. A $\delta$-free cylindric ortholattice is a special reduct of a cylindric ortholattice \cite{harding1, mcdonald}, and consists of an ortholattice equipped with a family of pairwise commuting quantifiers. As a consequence of our functional representation theorem for monadic ortholattices, it is shown that every $\delta$-free cylindric ortholattice is isomorphic to a functional one. In the third section, we introduce certain polyadic ortholattices, which we call \emph{locally finite substitution-free ($\sigma$-free) polyadic ortholattices}. These are certain generalizations of the locally finite polyadic algebras without substitution and can be viewed as the polyadic variants of the locally finite $\delta$-free cylindric ortholattices. We then show that the correspondence between locally finite $\sigma$-free polyadic ortholattices and locally finite $\delta$-free cylindric ortholattices is one-to-one. We conclude by demonstrating that every locally finite $\sigma$-free polyadic ortholattice is isomorphic to a functional one through this correspondence.

\section{Functional Monadic Ortholattices}
We begin by introducing general ortholattices and monadic ortholattices. For more details on the former, consult \cite{birkhoff2} and for the latter, consult \cite{harding1}. Our results later on in this section will make use of algebraic concepts such as V-formation, super-amalgamation, MacNeille completion, and regular completion. For more details on related concepts, consult Gr\"atzer \cite{gratzer}.  
\begin{definition}\label{ortholattice}
    An \emph{ortholattice} is a bounded lattice $\langle A;\wedge,\vee,0,1\rangle$ equipped with an operation $^{\perp}\colon A\to A$, known as an \emph{orthocomplementation}, satisfying:
    \begin{enumerate}
        \item $a\wedge a^{\perp}=0$, $a\vee a^{\perp}=1$; 
        \item $a\leq b\Rightarrow b^{\perp}\leq a^{\perp}$; 
        \item $a^{\perp\perp}=a$. 
    \end{enumerate}
\end{definition}

Conditions 1 through 3 of Definition \ref{ortholattice} amount to asserting that the operation of orthocomplementation is an order-inverting involutive complementation. We note that every ortholattice satisfies De Morgan's distribution laws and that an ortholattice $A$ is a Boolean algebra iff $A$ is distributive. Ortholattices are well known to be definable via a finite set of equations and hence form a variety. We denote by $\mathbb{OL}$ the variety of ortholattices.

\begin{definition}
    A \emph{monadic ortholattice} is an ortholattice $\langle A;\wedge,\vee, ^{\perp},0,1\rangle$ equipped with an additional operator $\exists\colon A\to A$, known as a \emph{quantifier}, satisfying:  
    \begin{enumerate}
        \item $\exists(a\vee b)=\exists a\vee\exists b$
        \item $\exists 0=0$
        \item $\exists\exists a=\exists a$
        \item $a\leq\exists a$
        \item $\exists(\exists a)^{\perp}=(\exists a)^{\perp}$
    \end{enumerate}
\end{definition}
Any quantifier on an ortholattice $A$ may be viewed as a closure operator whose \emph{closed elements}, i.e., $\{a\in A:a=\exists a\}$, form a sub-ortholattice of $A$.  It is obvious that $\exists\colon A\to A$ is monotone for any ortholattice $A$. 

\begin{remark}
We note that monadic ortholattices can be equivalently defined by an interior operator $\forall\colon A\to A$ on an ortholattice $A$ whose \emph{open elements}, i.e., $\{a\in A:a=\forall a\}$, form a sub-ortholattice of $A$, by defining $\forall a:=(\exists a^{\perp})^{\perp}$.  
\end{remark}

We now introduce the full functional monadic ortholattices. 

\begin{definition}\label{full funcitonal mol}
    Let $X$ be a non-empty set and let $A$ be a complete ortholattice. The \emph{full functional monadic ortholattice} determined by $X$ and $A$ is an algebra $\langle A^X; \cdot,+,-,c_0,c_1,\Diamond\rangle$ in which for all $f,g\in A^X$:
    \begin{enumerate}
        \item $(f\cdot g)(x)=f(x)\wedge g(x)$;
        \item $(f+g)(x)=f(x)\vee g(x)$; 
        \item $-f(x)=f(x)^{\perp}$; 
        \item $c_0(x)=0$ and $c_1(x)=1$; 
        \item $(\Diamond f)(x)=\bigvee\{f(x):x\in X\}$. 
    \end{enumerate}
\end{definition}
\begin{definition}\label{functional mol}
      An algebra is called a \emph{functional monadic ortholattice} if it is a subalgebra of a full functional monadic ortholattice. 
\end{definition}
\begin{proposition}\label{mol is a functional mol}
    The full functional monadic ortholattice generated by any non-empty set $X$ and any complete ortholattice $A$ is a monadic ortholattice. 
\end{proposition}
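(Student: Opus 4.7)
The plan is to verify each defining condition of a monadic ortholattice separately, beginning with the ortholattice axioms and then turning to the five quantifier axioms for $\Diamond$.

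First I would observe that because $\cdot,+,-,c_0,c_1$ are defined coordinate-wise from the ortholattice operations on $A$, every equational law holding in $A$ automatically lifts to $A^X$. In particular, the bounded-lattice axioms, the absorption laws, the law $a \wedge a^{\perp}=0$, the law $a\vee a^{\perp}=1$, involutivity $a^{\perp\perp}=a$, and the order-reversing property of $^{\perp}$ (which is equationally encodable by De Morgan) all transfer from $A$ to $A^X$ by pointwise verification. Thus $\langle A^X;\cdot,+,-,c_0,c_1\rangle$ is an ortholattice, with pointwise partial order $f \leq g \Leftrightarrow f(x)\leq g(x)$ for all $x \in X$.

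The second step is to verify the five quantifier axioms. The key observation is that for every $f \in A^X$, the function $\Diamond f$ is the constant function with value $\bigvee\{f(y):y \in X\}$, which is well-defined precisely because $A$ is complete. Axiom (4), $f \leq \Diamond f$, then follows pointwise from the definition of supremum. Axiom (3), $\Diamond \Diamond f = \Diamond f$, is immediate, since the join of the constant function $\Diamond f$ over $X$ is just its constant value. Axiom (5), $\Diamond (\Diamond f)^{\perp} = (\Diamond f)^{\perp}$, follows similarly: since $\Diamond f$ is constant, so is $(\Diamond f)^{\perp}$, and applying $\Diamond$ to a constant returns it unchanged. Axiom (2), $\Diamond c_0 = c_0$, reduces to $\bigvee\{0:x \in X\}=0$. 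Finally, axiom (1), $\Diamond(f+g)=\Diamond f + \Diamond g$, is the pointwise identity
\[
\bigvee\{f(y)\vee g(y) : y \in X\} \;=\; \bigvee\{f(y):y \in X\}\;\vee\;\bigvee\{g(y):y \in X\},
\]
which holds in any complete lattice.

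I do not anticipate any genuine obstacle here: the proposition is essentially a bookkeeping exercise, and the only conceptual point is that completeness of $A$ is needed to guarantee $\Diamond f$ is defined, after which axioms (3) and (5) become transparent from the constancy of $\Diamond f$.
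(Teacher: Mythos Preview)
Your proposal is correct and follows essentially the same approach as the paper: both verify the ortholattice axioms pointwise and then check the five quantifier conditions directly. Your explicit remark that $\Diamond f$ is a constant function, together with the observation that the ortholattice axioms lift to $A^X$ because ortholattices form a variety, packages the argument a bit more cleanly than the paper's element-by-element computations, but the underlying reasoning is the same.
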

\begin{proof}
    It is obvious that $\langle A^X;\cdot,+,-,c_0,c_1\rangle$ forms a bounded complemented lattice since $\langle A;\wedge,\vee,^{\perp},0,1\rangle$ is a bounded complemented lattice, and hence we first verify that $-$ is an order-inverting involution. Assume $f(x)\leq g(x)$ so that $g(x)=(f+g)(x)=f(x)\vee g(x)$. It suffices to show that $-f(x)=(-f+-g)(x)$. Since $A$ is an ortholattice, our hypothesis implies: \[(-f+-g)(x)=-f(x)\vee-g(x)=f(x)^{\perp}\vee g(x)^{\perp}=f(x)^{\perp}=-f(x).\] It is obvious that $-$ is an involution since $--f(x)=f(x)^{\perp\perp}=f(x)$. 
    
    We now verify that $\Diamond$ is a quantifier on $\langle A^X;\cdot,+,-,c_0,c_1\rangle$. The proof that $\Diamond$ is an additive operation runs as follows: 
    \begin{align*}
        \Diamond (f+g)(x)&=\bigvee\{(f+g)(x):x\in X\}=\bigvee\{f(x)\vee g(x):x\in X\}
        \\&=\bigvee\{f(x):x\in X\}\vee\bigvee\{g(x):x\in X\}
        \\&=(\Diamond f)(x)\vee(\Diamond g)(x)=(\Diamond f+\Diamond g)(x)
    \end{align*}       
    Moreover, to see that $\Diamond$ is increasing, note that:
    \begin{align*}
        (\Diamond f+f)(x)&=(\Diamond f)(x)\vee f(x)=\bigvee\{f(x):x\in X\}\vee f(x)\\&=\bigvee\{f(x):x\in X\}=(\Diamond f)(x)   
    \end{align*}
    and hence $f(x)\leq(\Diamond f)(x)$. Moreover, we have: \[(\Diamond c_0)(x)=\bigvee\{c_0(x):x\in X\}=\bigvee\{0\}=0\] as well as $(\Diamond\Diamond f)(x)=\bigvee\{(\Diamond f)(x):x\in X\}=(\Diamond f)(x)$. It remains to check that $(\Diamond-\Diamond f)(x)=(-\Diamond f)(x)$, which is verified by the following: 
    \begin{align*}
        (\Diamond-\Diamond f)(x)& =\bigvee\{(-\Diamond f)(x):x\in X\}=\bigvee\{(\bigvee\{f(x):x\in X\})^{\perp}\} \\ &= (\bigvee\{f(x):x\in X\})^{\perp}= (-\Diamond f)(x). 
    \end{align*}
     Therefore, the full functional monadic ortholattice generated by $X$ and $A$ is a monadic ortholattice. 
\end{proof}
\begin{proposition}
    Every functional monadic ortholattice is a monadic ortholattice. 
\end{proposition}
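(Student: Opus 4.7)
The plan is to observe that the class of monadic ortholattices is equationally definable, so that it is closed under taking subalgebras; the result then follows immediately from Proposition \ref{mol is a functional mol} together with Definition \ref{functional mol}.

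In more detail, I would first note that the ortholattice axioms from Definition \ref{ortholattice} can be formulated equationally in the standard way (the order-reversing condition $a\leq b\Rightarrow b^{\perp}\leq a^{\perp}$ is a consequence of De Morgan together with involution and absorption), so that $\mathbb{OL}$ is a variety. Similarly, each of the five quantifier axioms defining a monadic ortholattice is either already an equation or can be rewritten as one; in particular, the increasing condition $a\leq\exists a$ is equivalent to the equation $a\vee\exists a=\exists a$. Consequently, the class of monadic ortholattices is a variety, and in particular is closed under the formation of subalgebras.

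Now let $M$ be a functional monadic ortholattice. By Definition \ref{functional mol}, $M$ is a subalgebra of some full functional monadic ortholattice $A^X$ determined by a non-empty set $X$ and a complete ortholattice $A$. By Proposition \ref{mol is a functional mol}, $A^X$ is itself a monadic ortholattice. Since monadic ortholattices form a variety, $M$ inherits the defining identities from $A^X$, and is therefore a monadic ortholattice.

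There is essentially no obstacle here: the only mild subtlety is verifying that the nominally quasi-equational axioms (namely $a\leq b\Rightarrow b^{\perp}\leq a^{\perp}$ in the ortholattice definition and $a\leq\exists a$ in the quantifier definition) admit purely equational reformulations, which is routine. Everything else is a formal consequence of the fact that varieties are closed under subalgebras.
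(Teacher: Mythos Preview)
Your proof is correct and follows essentially the same line as the paper's own argument: both invoke Proposition~\ref{mol is a functional mol} to see that every full functional monadic ortholattice is a monadic ortholattice, and then use the fact that monadic ortholattices form a variety (hence are closed under subalgebras) together with Definition~\ref{functional mol}. Your added discussion of why the axioms admit equational formulations is a reasonable elaboration, but not a departure in method.
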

\begin{proof}
    By Proposition \ref{mol is a functional mol}, for any set $X$ and complete ortholattice $A$, the full functional monadic ortholattice generated by $X$ and $A$ is a monadic ortholattice. Since monadic ortholattices form a variety, they are closed under subalgebras. Thus, since every functional monadic ortholattice is a subalgebra of a full functional monadic ortholattice, every functional monadic ortholattice is a monadic ortholattice.  
\end{proof}
To prove the functional representation of monadic ortholattices, we use the similar strategy inspired by \cite{bezhanishvili}.

\begin{definition}\label{super-amalgamation}
    A \emph{V-formation} in $\mathbb{OL}$ consists of a quintuple $\langle A,A_1,A_2,\phi_1,\phi_2\rangle$ such that $A$, $A_1$, and $A_2$ are ortholattices and $\phi_i\colon A\to A_i$ (for $i=1,2$) are ortholattice embeddings. An \emph{amalgamation} of a V-formation $\langle A,A_1,A_2,\phi_1,\phi_2\rangle$ in $\mathbb{OL}$ is a triple $\langle B,\psi_1,\psi_2\rangle$ such that $B$ is an ortholattice and $\psi_i\colon A_i\to B$ (for $i=1,2$) are ortholattice embeddings satisfying $\psi_1\circ\phi_1=\psi_2\circ\phi_2$ making the following diagram commute: 
    \begin{center}
    \begin{tikzcd}
                        & A_1 \arrow[rd, "\psi_1"] &   \\
A \arrow[rd, "\phi_2"'] \arrow[ru, "\phi_1"] &              & B \\
                        & A_2 \arrow[ru, "\psi_2"'] &  
\end{tikzcd}
\end{center}
This amalgamation is a \emph{super-amalgamation} if for all $a_1\in A_1$ and $a_2\in A_2$ with $1\leq i\not=k\leq 2$, $\psi_i(a_i)\leq\psi_k(a_k)$ implies there exists $a\in A$ satisfying: \[\psi_i(a_i)\leq\psi_i\circ\phi_i(a)=\psi_k\circ\phi_k(a)\leq\psi_k(a_k).\]        
\end{definition}
\begin{theorem}\label{ol has super amalgamation}
    Every $V$-formation in $\mathbb{OL}$ has a super-amalgamation. 
\end{theorem}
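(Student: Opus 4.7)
The plan is to invoke the classical amalgamation results for the variety $\mathbb{OL}$ rather than build the amalgam from scratch. Bruns \cite{bruns} established that $\mathbb{OL}$ has the amalgamation property, and Miyazaki \cite{miyazaki} refined the construction to yield the stronger super-amalgamation property. So the proof should reduce to citing these works, with the remaining task being to unpack their output into the form stated in Definition \ref{super-amalgamation}.

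First I would pass to the MacNeille completions of the ortholattices involved. Given a V-formation $\langle A, A_1, A_2, \phi_1, \phi_2\rangle$, the completions $\overline{A}, \overline{A_1}, \overline{A_2}$ are again ortholattices by MacLaren \cite{maclaren}, and the embeddings $\phi_1, \phi_2$ extend to regular embeddings between the completions using Day's framework \cite{day} (which, as the introduction notes, is a tool that the paper intends to exploit). This reduces the problem to the complete ortholattice setting, where Bruns' free-product-style construction applies most cleanly and produces a complete ortholattice $B$ with ortholattice embeddings $\psi_1, \psi_2$ satisfying $\psi_1 \circ \phi_1 = \psi_2 \circ \phi_2$.

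The main obstacle is verifying the super-amalgamation condition itself: if $\psi_i(a_i) \leq \psi_k(a_k)$ for $\{i,k\} = \{1,2\}$, one must produce an interpolating element $a \in A$ with
\[
\psi_i(a_i) \leq \psi_i \circ \phi_i(a) = \psi_k \circ \phi_k(a) \leq \psi_k(a_k).
\]
This is exactly the nontrivial part and is not automatic from amalgamation alone. I would follow Miyazaki's argument, which analyzes how elements of the free product decompose as joins of ``products'' of generators drawn from the images of $A_1$ and $A_2$, and exploits the order-reversing involution to pass between upward and downward interpolants. The interpolant $a$ is then read off from the shared filter/ideal pair in $A$ witnessing the inequality.

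Finally, I would note that the amalgam $B$ produced lives a priori in the category of complete ortholattices; but since super-amalgamation is a purely order-theoretic/algebraic property of the diagram, restricting to the subalgebra of $B$ generated by $\psi_1(A_1) \cup \psi_2(A_2)$ (or leaving $B$ complete, since the definition does not require minimality) still yields a super-amalgamation in $\mathbb{OL}$. In short: the proof is essentially a controlled application of \cite{bruns, miyazaki}, with the MacNeille/regular-completion step serving only to ensure the existence of the ambient complete ortholattice in which Miyazaki's construction can be carried out.
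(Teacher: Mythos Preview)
Your core instinct is right and matches the paper exactly: the paper's proof is nothing more than a two-sentence citation of Bruns--Harding \cite{bruns} for amalgamation and Miyazaki \cite{miyazaki} for the sharpening to super-amalgamation. No unpacking, no reduction, no construction is carried out.

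Where your proposal diverges is in the MacNeille-completion detour, and that detour is both unnecessary and introduces a real gap. Miyazaki's result is stated and proved for arbitrary ortholattices, so there is no need to pass to the complete case first. More importantly, your step ``the embeddings $\phi_1, \phi_2$ extend to regular embeddings between the completions'' is not justified: the MacNeille completion is not functorial in the required sense. An ortholattice embedding $A \hookrightarrow A_i$ does induce a map $\overline{A} \to \overline{A_i}$, but that map can fail to be injective unless the original embedding already preserves all existing joins and meets, which a generic $\phi_i$ in a V-formation has no reason to do. Day's regularity statement (Theorem~\ref{regular completion of ol}) concerns the single embedding $A \hookrightarrow \overline{A}$, not the lifting of maps between two different lattices. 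So the completion step should simply be dropped; cite \cite{bruns, miyazaki} directly, as the paper does.
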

\begin{proof}
    It was shown by Bruns and Harding \cite{bruns} that $\mathbb{OL}$ satisfies the amalgamation property. Miyazaki \cite{miyazaki} extended their result by showing that $\mathbb{OL}$ satisfies the super-amalgamation property in the sense that every V-formation in $\mathbb{OL}$ admits of an super-amalgamation. 
\end{proof}
Recall that if $A$ is an ortholattice, its \emph{MacNeille completion} is a pair $\langle e;C\rangle$ such that $C$ is a complete lattice and $e\colon A\to C$ is a meet-dense and join-dense embedding (i.e., every element in $C$ is both a meet and a join of elements in the image of $A$ under $e$). A completion $\langle e;C\rangle$ is a \emph{regular completion} if $e$ preserves all existing meets and joins. 
\begin{theorem}\label{regular completion of ol}
    $\mathbb{OL}$ is closed under MacNeille completions. Moreover, every ortholattice admits of a regular completion.  
\end{theorem}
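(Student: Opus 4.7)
The plan is to prove the first assertion (closure of $\mathbb{OL}$ under MacNeille completions) directly and then derive the second (existence of a regular completion) as an immediate corollary, using the classical fact that the MacNeille completion of any bounded lattice preserves every existing meet and join.

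Let $A$ be an ortholattice and let $\langle e, C \rangle$ be the MacNeille completion of its bounded lattice reduct, furnished by the standard Dedekind--MacNeille construction. To equip $C$ with an orthocomplementation extending that of $A$, I would define, for each $c \in C$,
\[c^{*} := \bigvee \{ e(b^{\perp}) : b \in A \text{ and } c \leq e(b) \},\]
and verify that this coincides with the dual expression $\bigwedge \{ e(a^{\perp}) : a \in A \text{ and } e(a) \leq c \}$. The equality of these two formulas follows from the order-reversing involutive nature of $^{\perp}$ on $A$ together with the meet- and join-density of $e$; along the way one obtains $e(a)^{*} = e(a^{\perp})$, so $^{*}$ does extend $^{\perp}$ along $e$.

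Next I would verify the three axioms of Definition~\ref{ortholattice} for $\langle C, ^{*}\rangle$. Order-reversal is immediate from either formulation of $^{*}$, and the involution law $c^{**} = c$ follows from density combined with the duality between the two expressions for $c^{*}$. The main obstacle is the complementation law. For $c \wedge c^{*} = 0$, I would use join-density to write $c \wedge c^{*}$ as the join of those images $e(x)$, with $x \in A$, that lie below $c \wedge c^{*}$; each such $e(x)$ satisfies $e(x) \leq c$, so by the meet formulation of $c^{*}$ one has $c^{*} \leq e(x^{\perp})$, which forces $e(x) \leq e(x) \wedge e(x^{\perp}) = e(x \wedge x^{\perp}) = 0$. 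The dual argument, now using meet-density, yields $c \vee c^{*} = 1$.

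Finally, for the ``moreover'' clause I would invoke the classical fact that the MacNeille embedding already preserves every existing meet and join, so that $e \colon A \to C$ is regular; combined with the first part this shows that $\langle e, C \rangle$ itself serves as a regular completion of $A$. The technical heart of the argument lies in the complementation laws, where the density properties of $e$ are used to reduce statements about arbitrary elements of $C$ to statements about images from $A$, at which point the ortholattice axioms on $A$ close the argument.
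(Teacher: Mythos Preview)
Your argument is correct. The two defining formulas for $c^{*}$ do coincide (the key step, which you gesture at, is that $e(a)\leq c^{*}$ iff $c\leq e(a^{\perp})$, obtained from join-density of $e$), and your density reductions for the complementation laws go through as written: from $e(x)\leq c\wedge c^{*}$ one gets $e(x)\leq c^{*}\leq e(x^{\perp})$, hence $x\leq x^{\perp}$ and so $x=x\wedge x^{\perp}=0$.

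The route, however, differs from the paper's. The paper does not extend an orthocomplementation abstractly on the Dedekind--MacNeille completion; instead it invokes MacLaren's concrete construction via the orthoframe $X_A=\langle A\setminus\{0\};\perp\rangle$ with $a\perp b\iff a\leq b^{\perp}$, takes the complete ortholattice $\mathcal{B}(X_A)$ of $\perp$-biclosed subsets, and cites that $a\mapsto{\downarrow}a\setminus\{0\}$ is a meet- and join-dense embedding. The regularity clause is then dispatched by citing Day. Your approach is more self-contained and works directly from the abstract density characterisation of the MacNeille completion, avoiding any mention of orthoframes; the paper's approach is shorter because it outsources the verifications to the literature and, as a bonus, ties the completion to the orthoframe duality that underlies much of the surrounding theory. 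Substantively the two constructions produce the same object: the biclosed sets under the polarity $\perp$ are exactly the MacNeille cuts, and the set-theoretic $^{\perp}$ on $\mathcal{B}(X_A)$ agrees with your $^{*}$.
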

\begin{proof}
MacLaren \cite{maclaren} constructed the MacNeille completion of an ortholattice $A$ by first defining a relational structure $X_A=\langle A\setminus\{0\};\perp\rangle$ where $\perp\subseteq A\setminus\{0\}\times A\setminus\{0\}$ is defined by $a\perp b\Longleftrightarrow a\leq b^{\perp}$, and showed that $\perp$ is an orthogonality relation, i.e., irreflexive and symmetric, making $X_A$ an orthoframe. For any $U\subseteq X_A$, let $U^{\perp}=\{a\in A\setminus\{0\}:a\perp b\hspace{.2cm}\text{for all}\hspace{.2cm}b\in U\}$ and let $\mathcal{B}(X_A)=\{U\subseteq X_A:U=U^{\perp\perp}\}$. By Birkhoff \cite{birkhoff2}, $\langle\mathcal{B}(X);\cap,\sqcup,^{\perp},\emptyset,X\rangle$ is a complete ortholattice under $U\sqcup V:=(U\cup V)^{\perp\perp}$. Hence $\mathcal{B}(X_A)$ is a complete ortholattice. MacLaren showed that the map $g\colon A\to\mathcal{B}(X_A)$ defined by $g(a)={\downarrow}(a\setminus\{0\})$ provides a meet-dense and join-dense homomorphic embedding. Therefore, $\langle g;\mathcal{B}(X_A)\rangle$ is the MacNeille completion of $A$ and moreover, $\mathbb{OL}$ is closed under MacNeille completions since $\mathcal{B}(X_A)$ is an ortholattice. For part 2, it is known \cite{day} that the embedding associated with the MacNeille completion of any lattice preserves all existing meets and joins.  
\end{proof}

Throughout the remainder of this section, we shall fix a monadic ortholattice $A$ and set $B=\{a\in A:a=\exists a\}$. Note that $B$ is a sub-ortholattice of $A$. The following mimicks the recursive definition given by Bezhanishvili and Harding \cite{bezhanishvili} and relies on Theorem \ref{ol has super amalgamation}.  
\begin{definition}\label{recursive definition}
    Define recursively for each $n\geq 0$, ortholattices $A_n$, and injective ortholattice homomorphisms $f_n,g_n,h_n$ in the following manner: 
    \begin{enumerate}
        \item for the case when $n=0$, let $\langle A_0;f_0,g_0\rangle$ be a super-amalgamation of the V-formation $\langle B, A, A, 1_B, 1_B\rangle$ with ortholattice embeddings: 
        \[1_B\colon B\to A,\hspace{.2cm}f_0,g_0\colon A\to A_0\] satisfying the following equation: \[f_0\circ 1_A=g_0\circ 1_A\] so that the following diagram commutes: 
        
        \begin{center}
    \begin{tikzcd}
                        & A \arrow[rd, "f_0"] &   \\
B \arrow[rd, "1_B"'] \arrow[ru, "1_B"] &              & A_0 \\
                        & A \arrow[ru, "g_0"'] &  
\end{tikzcd}
\end{center}
        
        \noindent Then set: \[h_0=f_0|_B=g_0|_B.\]
        \item  for the case when $n>0$, let $\langle A_n,f_n,g_n\rangle$ be a super-amalgamation of the V-formation $\langle B,A_{n-1},A,h_{n-1},1_B\rangle$ with ortholattice embeddings:
        \[h_{n-1}\colon B\to A_{n-1}, \hspace{.2cm}1_B\colon B\to A, \hspace{.2cm} f_{n}\colon A_{n-1}\to A_n, \hspace{.2cm}g_n\colon A\to A_n\] satisfying the following equation: \[f_{n}\circ h_{n-1}=g_n\circ 1_B\] so that the following diagram commutes: 
         \begin{center}
    \begin{tikzcd}
                        & A_{n-1} \arrow[rd, "f_{n}"] &   \\
B \arrow[rd, "1_B"'] \arrow[ru, "h_{n-1}"] &              & A_n \\
                        & A \arrow[ru, "g_n"'] &  
\end{tikzcd}
\end{center}

        \noindent Then set: \[h_n=f_{n}\circ h_{n-1}=g_n|_B.\]   
    \end{enumerate}
\end{definition}

Now consider a directed family of injective ortholattice homomorphisms: 
\begin{center}
\begin{tikzcd}
A_0 \arrow[r, "f_1"] & A_1 \arrow[r, "f_2"] & A_2 \arrow[r, "f_3"] & ...
\end{tikzcd}
\end{center}
Let $C$ along with $(d_{n})_{n\in\omega}\colon A_n\to C$ be the directed limit of this family.     

The following will be used in the proof of the functional representation theorem and dualizes the arguments given in \cite[Lemma 3.3 and 3.4]{bezhanishvili}.  
\begin{lemma}\label{lemma 2.12}
  Let $B$ be given as in Definition \ref{recursive definition}. Then, for all $a\in B$ and $m,n\in\omega$, we have $d_m\circ g_m(a)=d_n\circ g_n(a)$. Therefore, for every $b\in A_m$, if $d_n\circ g_n(a)\leq d_m(b)$ for all $n\in\omega$, we have $d_n\circ g_n(\exists a)\leq d_m(b)$ for all $n\in\omega$.   
\end{lemma}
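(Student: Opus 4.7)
The plan is to prove the two claims in succession, with the first serving as the bridge that transfers the conclusion of the second across all indices.

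For the equality $d_m \circ g_m(a) = d_n \circ g_n(a)$ when $a \in B$, I would induct on $n$ to show $d_n \circ g_n(a) = d_{n-1} \circ g_{n-1}(a)$ for every $n \geq 1$. The key identity is
$g_n(a) = h_n(a) = f_n(h_{n-1}(a)) = f_n(g_{n-1}(a))$,
read off from Definition~\ref{recursive definition} using $h_n = f_n \circ h_{n-1}$ together with $h_{n-1}|_B = g_{n-1}|_B$ (and the base relation $h_0 = f_0|_B = g_0|_B$ handling $n = 1$). Applying the cocone identity $d_n \circ f_n = d_{n-1}$ of the directed limit then telescopes every $d_n \circ g_n(a)$ down to the common value $d_0 \circ g_0(a)$.

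For the second claim, which I read as asserting that for arbitrary $a \in A$ and $b \in A_m$ the hypothesis $d_n \circ g_n(a) \leq d_m(b)$ for every $n$ forces $d_n \circ g_n(\exists a) \leq d_m(b)$ for every $n$, the strategy is to invoke Theorem~\ref{ol has super amalgamation}. Fix $n \geq m+1$ and write $F_{m,n}$ for the composition $f_n \circ \cdots \circ f_{m+1} \colon A_m \to A_n$. Since the cocone maps of a directed colimit of injective homomorphisms in a variety are themselves injective, they are order embeddings, so the hypothesis translates into the inequality $g_n(a) \leq F_{m,n}(b) = f_n(F_{m,n-1}(b))$ inside $A_n$. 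The right-hand side now lies in the image of the left leg $f_n \colon A_{n-1} \to A_n$ of the super-amalgamation at level $n$, while $g_n(a)$ lies in the image of the right leg $g_n \colon A \to A_n$. Applying super-amalgamation to the V-formation $\langle B, A_{n-1}, A, h_{n-1}, 1_B \rangle$ produces an interpolant $c \in B$ with $g_n(a) \leq g_n(c) = f_n(h_{n-1}(c)) \leq f_n(F_{m,n-1}(b))$. Because $g_n$ is an order embedding, this forces $a \leq c$ in $A$, and because $c \in B$ is closed, monotonicity of $\exists$ yields $\exists a \leq \exists c = c$. Pushing this back through $g_n$ and $d_n$ gives $d_n \circ g_n(\exists a) \leq d_m(b)$; and since $\exists a \in B$, the first part promotes this single inequality to every $n \in \omega$.

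The main obstacle is recognizing super-amalgamation as the correct tool: the hypothesis places $g_n(a)$ below an element coming from $A_{n-1}$ inside $A_n$, and super-amalgamation is precisely what supplies a witness already living in the closed subalgebra $B$ that sits between them, so that the closure operator $\exists$ can absorb $a$ without leaving $B$. Once the interpolant $c$ is in hand, the remaining steps are routine chases through the recursive definitions and the cocone identities.
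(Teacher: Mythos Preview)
Your proof is correct and follows essentially the same approach as the paper's: both parts are handled the same way, with Part~1 telescoping via $h_n = g_n|_B$, $h_n = f_n \circ h_{n-1}$, and the cocone identity $d_n = d_{n+1}\circ f_{n+1}$, and Part~2 invoking super-amalgamation at a level above $m$ to produce an interpolant $c\in B$ with $a\le c$, so that $\exists a\le c$ and the inequality pushes back down to $d_m(b)$. The only cosmetic difference is that the paper works specifically at level $n=m+1$ (so $F_{m,n}=f_{m+1}$), noting that the single instance $d_{m+1}\circ g_{m+1}(a)\le d_m(b)$ of the hypothesis already suffices, whereas you fix an arbitrary $n\ge m+1$ and use the composite $F_{m,n}$.
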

\begin{proof}
   We first note that it is obvious that $C$ is an ortholattice since $A_n$ is an ortholattice and that $f_n$ is an ortholattice embedding since each $d_n$ is an ortholattice embedding. For part 1, it suffices to show $d_n\circ g_n(a)=d_{n+1}\circ g_{n+1}(a)$ for all $a\in B$ and $n\in\omega$. First note that: 
   \begin{equation}\label{2.1}
       d_n\circ g_n(a)=d_n\circ h_n(a)
   \end{equation}
    since $h_n=g_n|_B$ by recursive clause 2 of Definition \ref{recursive definition}. Moreover, by the definition of direct limit we have $d_n=d_{n+1}\circ f_{n+1}$ and hence:
    \begin{equation}\label{2.2}
        d_n\circ h_n(a)=d_{n+1} \circ f_{n+1}\circ h_{n}(a).
    \end{equation}
 By recursive clause 2 of Definition \ref{recursive definition}, we have $g_{n}=f_n\circ h_{n-1}$. Therefore  $g_{n+1}|_B=f_{n+1}\circ h_n$, which yields the following:  
\begin{equation}\label{2.3}
    d_{n+1}\circ f_n\circ h_n(a)=d_{n+1}\circ g_{n+1}(a).
\end{equation}
By Equations \eqref{2.1}-\eqref{2.3}, we find that $d_n\circ g_n(a)=d_{n+1}\circ g_{n+1}(a)$, as desired. 

For part 2 of the claim, assume that $d_{m+1}\circ g_{m+1}(a)\leq d_{m}(b)$. It suffices to show that $d_{m+1}\circ g_{m+1}(\exists a)\leq d_m(b)$. Notice that by the definition of directed limit, we have $d_m=d_{m+1}\circ f_{m+1}$ and hence by our hypothesis: 
\begin{equation}\label{2.4}
    d_{m+1}\circ g_{m+1}(a)\leq d_{m+1}\circ f_{m+1}(b).
\end{equation}
Since $d_{m+1}$ is an ortholattice embedding, we have $g_{m+1}(a)\leq f_{m+1}(b)$. Observe that by Definition \ref{recursive definition}, the V-formation $\langle B;A_m,A,h_m,1_B\rangle$ is super-amalgamated by $\langle A_{m+1},f_{m+1},g_{m+1}\rangle$. Hence, there is some $c\in B$ such that:
\begin{equation}\label{2.5}
    g_{m+1}(a)\leq g_{m+1}\circ 1_B(c)=f_{m+1}\circ h_m(c)\leq f_{m+1}(b).
\end{equation}
 Clearly $g_{m+1}\circ 1_B(c)=g_{m+1}(c)$ and thus by (\ref{2.5}), $g_{m+1}(a)\leq g_{m+1}(c)$. Since $g_{m+1}$ is an ortholattice embedding, this implies $a\leq c$. Since $\exists$ is monotone, we have $\exists a\leq\exists c$. This, together with the fact that $B$ is the sub-ortholattice of closed elements of $A$ and $c\in B$, we have $\exists a\leq c$. Again, by (\ref{2.5}), we have $f_{m+1}\circ h_m(c)\leq f_{m+1}(b)$ and since $f_{m+1}$ is an ortholattice embedding, we have $h_m(c)\leq b$. Therefore, we have established $h_m(\exists a)\leq h_m(c)\leq b$ so in particular, we have $h_m(\exists a)\leq b$. This implies $d_m\circ h_m(\exists a)\leq d_m(b)$ but $h_m(\exists a)=g_m(\exists a)$ by recursive clause 2 and thus $d_m\circ g_m(\exists a)\leq d_m(b)$. Finally, by the first part of the proof, we have established that $d_m\circ g_m(a)=d_n\circ g_n(a)$ for all $m,n\in\omega$ and hence $d_n\circ g_n(\exists a)\leq d_m(b)$, as desired.   
\end{proof}
\begin{lemma}\label{lemma 2.13}
    Let $A$ be a monadic ortholattice and let $a\in A$. Then the set $\{d_n\circ g_n(a):n\in\omega\}$ has a least upper bound in $C$. 
\end{lemma}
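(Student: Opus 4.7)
The plan is to exploit the closure of $\exists a$ together with Lemma~\ref{lemma 2.12} to exhibit an explicit witness for the least upper bound. Since $\exists\exists a=\exists a$, the element $\exists a$ lies in $B$. By part~1 of Lemma~\ref{lemma 2.12}, the value $\alpha:=d_n\circ g_n(\exists a)$ is therefore independent of $n\in\omega$, and I will argue that this $\alpha$ is the least upper bound of $S:=\{d_n\circ g_n(a):n\in\omega\}$ in $C$.

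First I would verify that $\alpha$ is an upper bound of $S$. Since $a\leq\exists a$ in $A$ and every $g_n$ is an ortholattice embedding, hence order-preserving, we have $g_n(a)\leq g_n(\exists a)$, and applying the ortholattice embedding $d_n$ yields $d_n\circ g_n(a)\leq d_n\circ g_n(\exists a)=\alpha$ for every $n\in\omega$. Next, to establish minimality, let $c\in C$ be any upper bound of $S$. Because $C$ is the directed colimit of the chain $(A_n,f_n)_{n\in\omega}$ in the variety $\mathbb{OL}$, every element of $C$ lies in the image of some $d_m$; choose $m\in\omega$ and $b\in A_m$ with $c=d_m(b)$. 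Then $d_n\circ g_n(a)\leq d_m(b)$ holds for every $n$, so part~2 of Lemma~\ref{lemma 2.12} gives $d_n\circ g_n(\exists a)\leq d_m(b)=c$ for every $n$. Specializing to any single index $n$, we conclude $\alpha\leq c$, as required.

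The step I expect to require the most care is the appeal to the directed-colimit description of $C$, namely that each element of $C$ is of the form $d_m(b)$ for some $m\in\omega$ and some $b\in A_m$. This is standard for directed colimits in varieties of algebras and is implicit in the construction of $C$, but it is precisely what allows us to convert an abstract upper bound $c\in C$ into concrete data $(m,b)$ to which Lemma~\ref{lemma 2.12} can be applied. Once this is in hand, the verification reduces to two short order-preservation arguments together with the two clauses of Lemma~\ref{lemma 2.12}. Note that no recourse to the MacNeille or regular completion of $C$ is needed at this stage; those tools will enter only when the functional representation is assembled.
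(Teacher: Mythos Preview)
Your proposal is correct and follows essentially the same route as the paper's own proof: both identify $d_k\circ g_k(\exists a)$ (constant in $k$ by part~1 of Lemma~\ref{lemma 2.12}) as an upper bound via $a\leq\exists a$, then use the directed-limit description of $C$ to write an arbitrary upper bound as $d_m(b)$ and invoke part~2 of Lemma~\ref{lemma 2.12}. Your explicit remark that $\exists a\in B$ (needed for part~1 to apply) is a small clarification the paper leaves implicit.
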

\begin{proof}
    By part 1 of Lemma \ref{lemma 2.12}, we have for $k\in\omega$, $d_k\circ g_k(\exists a)=d_n\circ g_n(\exists a)$ for any $n\in\omega$. Then, since $\exists$ is increasing, we have $d_n\circ g_n(a)\leq d_k\circ g_k(\exists a)$. Thus $d_k\circ g_k(\exists a)$ is an upper bound of $\{d_n\circ g_n(a):n\in\omega\}$. Now let $b$ be any upper bound of $\{d_n\circ g_n(a):n\in\omega\}$ so that $d_n\circ g_n(a)\leq b$. By the definition of direct limits, there exists $m\in\omega$ and $c\in A_m$ such that $b=d_m(c)$. Then by part 2 of Lemma \ref{lemma 2.12}, we have $d_k\circ g_k(\exists a)\leq d_m(c)=b$, as desired.  
\end{proof}
The following is an immediate consequence of the above result. 
\begin{lemma}\label{cor1}
    For all $k\in\omega$, we have 
    $\bigvee\{d_n\circ g_n(a):n\in\omega\}=d_k\circ g_k(\exists a)$. 
\end{lemma}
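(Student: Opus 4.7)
The plan is to observe that the statement of Lemma \ref{cor1} is essentially already embedded in the proof of Lemma \ref{lemma 2.13}; what remains is to make two small observations explicit and package them into an equality.

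First, I would note that $\exists a\in B$ for every $a\in A$, since $\exists\exists a=\exists a$ means that $\exists a$ is a closed element. Thus part 1 of Lemma \ref{lemma 2.12} applies to $\exists a$, giving that the value $d_k\circ g_k(\exists a)$ is independent of $k\in\omega$. This ensures that the right-hand side of the claimed equality is a well-defined element of $C$ regardless of the choice of $k$.

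Next, I would invoke the proof of Lemma \ref{lemma 2.13} directly: there it is shown that $d_k\circ g_k(\exists a)$ is an upper bound of $\{d_n\circ g_n(a):n\in\omega\}$ (using that $a\leq\exists a$ and monotonicity), and that for any upper bound $b$, one may write $b=d_m(c)$ for some $m\in\omega$ and $c\in A_m$ by the definition of the directed limit, and then apply part 2 of Lemma \ref{lemma 2.12} to conclude $d_k\circ g_k(\exists a)\leq d_m(c)=b$. Combining these two facts shows that $d_k\circ g_k(\exists a)$ is precisely the least upper bound of $\{d_n\circ g_n(a):n\in\omega\}$, which is exactly the claimed equality.

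There is no real obstacle here beyond bookkeeping — the main work has been done in Lemmas \ref{lemma 2.12} and \ref{lemma 2.13}. The only subtle point worth flagging is the need to justify that the right-hand side does not depend on the choice of $k$, which is why the preliminary observation that $\exists a\in B$ (together with part 1 of Lemma \ref{lemma 2.12}) is essential before asserting the equality uniformly in $k$.
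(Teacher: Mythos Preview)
Your proposal is correct and follows exactly the paper's approach: the paper simply states that the lemma is an immediate consequence of Lemma~\ref{lemma 2.13}, and you have spelled out precisely what that immediacy amounts to. Your additional remark that $\exists a\in B$ (so that part~1 of Lemma~\ref{lemma 2.12} applies and the right-hand side is independent of $k$) is a helpful clarification that the paper leaves implicit.
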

The following result follows by similar arguments contained in the proofs of Lemma \ref{lemma 2.12} and Lemma \ref{lemma 2.13}. Although it is worth noting, we will not make use of it in the proof of our functional representation theorem. 
\begin{proposition}
    If $b\in A_m$ and $a\in A$, then if $d_m(b)\leq d_n\circ g_n(a)$ for all $n\in\omega$, then $d_m(b)\leq d_n\circ g_n(\forall a)$ for all $n\in\omega$. Thus $\{d_n\circ g_n(a):n\in\omega\}$ has a greatest lower bound. Thus for all $k\in\omega$, $\bigwedge\{d_n\circ g_n(a):n\in\omega\}=d_k\circ g_k(\forall a)$.  
\end{proposition}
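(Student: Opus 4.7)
The plan is to dualize the arguments of Lemma~\ref{lemma 2.12} and Lemma~\ref{lemma 2.13} by exploiting the identity $\forall a = (\exists a^\perp)^\perp$ together with the fact that every ortholattice embedding preserves orthocomplementation. No new appeal to super-amalgamation is needed: all the heavy lifting has already been carried out inside Lemma~\ref{lemma 2.12}, and the role of orthocomplement is simply to translate between the $\exists$-statement and the $\forall$-statement.

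First I would record that $\forall a \in B$, i.e., that $\forall a$ is a closed element of $A$. This is immediate from axiom (5) of the definition of monadic ortholattice: $\exists(\exists a^\perp)^\perp = (\exists a^\perp)^\perp$, which is just $\exists(\forall a) = \forall a$. Consequently, part~1 of Lemma~\ref{lemma 2.12} applied to $\forall a$ gives $d_m \circ g_m(\forall a) = d_n \circ g_n(\forall a)$ for all $m,n \in \omega$, so this quantity is constant in its subscript. I would also note that axiom (4) implies $a^\perp \leq \exists a^\perp$, and hence $\forall a = (\exists a^\perp)^\perp \leq a$.

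Next, to establish the main implication, assume $d_m(b) \leq d_n \circ g_n(a)$ for all $n \in \omega$. Since each $d_n$ and each $g_n$ is an ortholattice embedding and hence preserves $^\perp$, applying orthocomplementation to both sides of the hypothesis yields
\[ d_n \circ g_n(a^\perp) \leq d_m(b^\perp) \text{ for all } n \in \omega. \]
Now apply part~2 of Lemma~\ref{lemma 2.12} with $a^\perp \in A$ and $b^\perp \in A_m$ in place of $a$ and $b$ to obtain
\[ d_n \circ g_n(\exists a^\perp) \leq d_m(b^\perp) \text{ for all } n \in \omega. \]
Applying $^\perp$ once more and using $(\exists a^\perp)^\perp = \forall a$ produces the desired inequality $d_m(b) \leq d_n \circ g_n(\forall a)$ for all $n \in \omega$.

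Finally, for the greatest-lower-bound claim, monotonicity of $d_k \circ g_k$ together with $\forall a \leq a$ gives $d_k \circ g_k(\forall a) \leq d_k \circ g_k(a)$ for every $k$, so by the constancy observation the element $d_k \circ g_k(\forall a)$ is a lower bound of $\{d_n \circ g_n(a) : n \in \omega\}$ independent of $k$. To see that it is the greatest lower bound, suppose $c \in C$ is any lower bound; by the definition of directed limit, $c = d_m(b)$ for some $m \in \omega$ and $b \in A_m$, whence $d_m(b) \leq d_n \circ g_n(a)$ for all $n$. The implication just proved then gives $c = d_m(b) \leq d_n \circ g_n(\forall a) = d_k \circ g_k(\forall a)$ for every $k$, as required. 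The only subtlety anywhere in this argument is ensuring that the direction of the inequalities inverts correctly under $^\perp$ and that $\forall a$ really lies in $B$ so that Lemma~\ref{lemma 2.12}(1) applies to it, and both are immediate.
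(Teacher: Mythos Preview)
Your proof is correct and follows essentially the same approach as the paper: the paper's proof is just a one-line sketch citing Lemmas~\ref{lemma 2.12} and~\ref{lemma 2.13} together with the inter-definability of $\exists$ and $\forall$ via orthocomplementation, and you have faithfully carried out exactly that dualization. The only cosmetic difference is that the paper also invokes the adjunction $\exists a \leq b \Leftrightarrow a \leq \forall b$, whereas you work directly with $\forall a = (\exists a^\perp)^\perp$ and the order-reversing property of $^\perp$, which amounts to the same thing.
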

\begin{proof}
    The claim follows from Lemmas \ref{lemma 2.12} and \ref{lemma 2.13}, the well-known fact that $\exists a\leq b\Longleftrightarrow a\leq\forall b$, and the inter-definability of $\exists$ and $\forall$ via the orthocomplementation operation.  
\end{proof}

\begin{theorem}\label{functional representation of mol}
    Every monadic ortholattice is isomorphic to a functional monadic ortholattice. 
\end{theorem}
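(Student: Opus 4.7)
The plan is to embed $A$ into a full functional monadic ortholattice built from the index set $\omega$ and a suitable complete ortholattice $D$. I would take $D$ to be the regular (MacNeille) completion of the directed limit $C$ obtained from the chain $A_0 \to A_1 \to A_2 \to \cdots$. By Theorem \ref{regular completion of ol}, such a $D$ exists, is a complete ortholattice, and the associated ortholattice embedding $e \colon C \to D$ preserves all joins and meets that already exist in $C$. Define $\Phi \colon A \to D^{\omega}$ by
\[
\Phi(a)(n) = e(d_n \circ g_n(a)).
\]
The claim is that $\Phi$ embeds $A$ as a subalgebra of the full functional monadic ortholattice determined by $\omega$ and $D$.

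That $\Phi$ respects the ortholattice operations is straightforward, since each of $g_n$, $d_n$, and $e$ is an ortholattice homomorphism and the operations on $D^{\omega}$ are defined pointwise. Injectivity is equally easy: for any fixed $n$, the composition $e \circ d_n \circ g_n$ is a composition of ortholattice embeddings, so $\Phi(a) = \Phi(b)$ forces $a = b$ by evaluating at that single coordinate.

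The substantive step is verifying that $\Phi$ intertwines $\exists$ with $\Diamond$. For any $x \in \omega$,
\[
(\Diamond \Phi(a))(x) = \bigvee\{\Phi(a)(n) : n \in \omega\} = \bigvee\{e(d_n \circ g_n(a)) : n \in \omega\}.
\]
By Lemma \ref{cor1}, the join $\bigvee\{d_n \circ g_n(a) : n \in \omega\}$ exists in $C$ and equals $d_k \circ g_k(\exists a)$ for any $k$. Since $e$ is a regular embedding, it preserves this existing join, so $(\Diamond \Phi(a))(x) = e(d_k \circ g_k(\exists a))$. On the other hand, $\Phi(\exists a)(x) = e(d_x \circ g_x(\exists a))$, and because $\exists a \in B$, Lemma \ref{lemma 2.12}(1) tells us that $d_x \circ g_x(\exists a) = d_k \circ g_k(\exists a)$ for any $k$. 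Hence $\Phi(\exists a)(x) = (\Diamond \Phi(a))(x)$ for every $x \in \omega$, showing $\Phi(\exists a) = \Diamond \Phi(a)$.

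The main obstacle, already packaged in the preceding lemmas, is the preservation of $\exists$: it requires both that the relevant joins $\bigvee_n d_n \circ g_n(a)$ exist in $C$ and collapse to a single element (this is the content of Lemmas \ref{lemma 2.12}--\ref{cor1}, which depend on super-amalgamation in $\mathbb{OL}$) and that passing to $D$ via $e$ does not disturb these joins (this is where regularity of the MacNeille completion is essential). Once these two ingredients are in place, the representation is immediate from the definitions of $\Phi$ and of a full functional monadic ortholattice, and the theorem follows.
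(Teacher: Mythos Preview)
Your proposal is correct and follows essentially the same route as the paper: take the regular (MacNeille) completion of the directed limit $C$, define the embedding coordinatewise via $e\circ d_n\circ g_n$, and use Lemma~\ref{cor1} together with regularity of $e$ to verify $\Phi(\exists a)=\Diamond\Phi(a)$. Your write-up is in fact slightly more explicit than the paper's, spelling out injectivity and invoking Lemma~\ref{lemma 2.12}(1) for the constancy of $d_x\circ g_x(\exists a)$ in $x$.
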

\begin{proof}
    Let $A$ be a monadic ortholattice and let $C,A_n,g_n,d_n,$ be given as above. For simplicity, we identify $X$ with $\omega$ since $X$ is assumed to be countable.  Let $\bar{C}$ be a regular completion of $C$ whose associated ortholattice embedding is $i : C \to \bar{C}$. We know that $C$ admits of a regular completion by Theorem \ref{regular completion of ol}. Therefore, define: $$f : A \to \bar{C}^{\omega}\; \text{ where }  (a,n) \mapsto i \circ d_n \circ g_n (a). $$ We will show that $f$ determines a monadic ortholattice embedding from $A$ into the full functional monadic ortholattice $\langle(\bar{C})^{\omega};\Diamond\rangle$. Let $a \in A$, then: \begin{align*}
        f (\exists a)(n)& = i\circ d_n \circ g_n(\exists a) = i(\bigvee\{d_n \circ g_n(a):n \in \omega\}) \\
        & = \bigvee \{i \circ d_n \circ g_n (a):n \in \omega\} = \Diamond (i \circ d_n \circ g_n)(a) \\
        & = (\Diamond f(a))(n)
    \end{align*}
The second and third equality use Corollary \ref{cor1} and the fact that the embedding $i$ preserves meet operation. This shows that $f(\exists a)= \Diamond f(a)$ which proves that $f$ is a monadic ortholattice embedding. Therefore $A$ is isomorphic to a subalgebra of $\langle(\bar{C})^{\omega};\Diamond\rangle$ under $f$ and hence $A$ is isomorphic to a functional monadic ortholattice. 
\end{proof}
We proceed by extending Theorem \ref{functional representation of mol} to a functional representation theorem for locally finite $\delta$-free cylindric ortholattices, which form a special reducts of cylindric ortholattices. 

\begin{definition}\label{cylindric ortholattice}
   A \emph{cylindric ortholattice} is an ortholattice $A$ equipped with a family of unary operators $(\exists_i)_{i\in I}$ and a family of constants $(\delta_{i,k})_{i,k\in I}$ such that for all $i,k,l\in I$, the following conditions are satisfied:
    \begin{enumerate}
        \item $\exists_{i}\colon A\to A$ is a quantifier;
        \item $\exists_{i}\exists_{k}a=\exists_{k}\exists_{i}a$;
        \item $\delta_{i,k}=\delta_{k,i}$ and $\delta_{i,i}=1$;
        \item $i,l\not=k\Rightarrow\exists_{k}(\delta_{i,k}\wedge \delta_{k,l})=\delta_{i,l}$.
    \end{enumerate}
\end{definition}
\begin{remark}
Note that in the definition of cylindric algebras, there is an additional axiom asserting that for all $i,k\in I$ such that $i\not=k$, we have $\exists_i(d_{i,k}\wedge a)\wedge\exists_{k}(d_{i,k}\wedge a^{\perp})=0$. This induces an operation of substitution $\sigma^i_k\colon A\to A$ defined by $\sigma^i_k(a)=\exists_i(d_{i,k}\wedge a)$ that is an endomorphism of the Boolean algebra reduct. For details on why this axiom is not included in the definition of cylindric ortholattices, consult \cite[Remark 5.15]{harding1}.
\end{remark}
\begin{definition}
    A \emph{$\delta$-free cylindric ortholattice} is an ortholattice $A$ with a family of operators  $(\exists_{i})_{i\in I}$ satisfying conditions 1 and 2 of Definition \ref{cylindric ortholattice}. 
\end{definition}

In other words, a $\delta$-free cylindric ortholattice is an ortholattice with a family of pairwise commuting quantifiers. We note that in the case when $I=\{i\}$, a $\delta$-free cylindric ortholattice is precisely a monadic ortholattice. 
\begin{definition}
 Let $X$ be a non-empty set and let $A$ be a complete ortholattice. The \emph{full functional $\delta$-free cylindric ortholattice} determined by $X$ and $A$ is an algebra $\langle A^X; \cdot,+,-,c_0,c_1,(\Diamond_i)_{i\in I}\rangle$ such that $\langle A^X; \cdot,+,-,c_0,c_1,\Diamond_i\rangle$ is a full functional monadic ortholattice for each $i\in I$.   
\end{definition}
\begin{definition}
    An algebra is a \emph{functional $\delta$-free cylindric ortholattice} if it is a subalgebra of a full functional $\delta$-free cylindric ortholattice.
\end{definition}
\begin{proposition}\label{diag free in a diag free ol}
    Let $X$ be a non-empty set and let $A$ be a complete ortholattice. The full functional $\delta$-free cylindric ortholattice determined by $X$ and $A$ is a $\delta$-free cylindric ortholattice.  
    \end{proposition}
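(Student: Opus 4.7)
The plan is to verify the two defining axioms of a $\delta$-free cylindric ortholattice: first, that each $\Diamond_i$ is a quantifier on the ortholattice reduct $\langle A^X; \cdot, +, -, c_0, c_1\rangle$, and second, that the family $(\Diamond_i)_{i \in I}$ is pairwise commuting.

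For the first axiom, I would observe that by the very definition of a full functional $\delta$-free cylindric ortholattice, for each $i \in I$ the algebra $\langle A^X; \cdot, +, -, c_0, c_1, \Diamond_i\rangle$ is itself a full functional monadic ortholattice determined by $X$ and $A$. Proposition \ref{mol is a functional mol} then yields directly that this algebra is a monadic ortholattice, so $\Diamond_i$ is a quantifier on the ortholattice reduct for every $i \in I$. This handles the quantifier clause of Definition \ref{cylindric ortholattice} uniformly across $I$.

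For the commutativity axiom, I would unfold Definition \ref{full funcitonal mol}(5). For any $f \in A^X$ and $x \in X$, we have $(\Diamond_i f)(x) = \bigvee\{f(y) : y \in X\}$, so $\Diamond_i f$ is the constant function on $X$ with value $\bigvee\{f(y) : y \in X\}$. Applying any $\Diamond_k$ to such a constant function $c \in A^X$ returns $c$ itself, since $(\Diamond_k c)(x) = \bigvee\{c(y) : y \in X\} = c(x)$. Combining these observations gives
\[(\Diamond_k \Diamond_i f)(x) = (\Diamond_i f)(x) = \bigvee\{f(y) : y \in X\},\]
and by the symmetric argument $(\Diamond_i \Diamond_k f)(x) = \bigvee\{f(y) : y \in X\}$ as well, so $\Diamond_i \Diamond_k = \Diamond_k \Diamond_i$ as required.

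The argument presents no substantial obstacle: the content is packaged entirely into Proposition \ref{mol is a functional mol} together with the elementary observation that in the full functional setting every $\Diamond_i f$ collapses to a constant function, which in turn is fixed by every subsequent $\Diamond_k$. The only mild subtlety is being careful to read the colliding bound/free variable $x$ in Definition \ref{full funcitonal mol}(5) correctly, which is what justifies treating $\Diamond_i f$ as a constant function.
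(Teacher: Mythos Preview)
Your proof is correct and follows essentially the same approach as the paper: invoke Proposition~\ref{mol is a functional mol} for the quantifier axiom, then verify commutativity by unfolding the definition of $\Diamond_i$. Your observation that each $\Diamond_i f$ is a constant function (hence fixed by any $\Diamond_k$) is exactly what underlies the paper's displayed chain of equalities, just stated more explicitly.
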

    \begin{proof}
By Proposition \ref{mol is a functional mol}, we know that the full functional monadic ortholattice $\langle A^X; \cdot,+,-,c_0,c_1,\Diamond_i\rangle$ is a monadic ortholattice for each $i\in I$. Therefore, it remains to verify that $(\Diamond_i)_{i\in I}$ is a family of pairwise commuting quantifiers on $\langle A^X; \cdot,+,-,c_0,c_1\rangle$, but this is obvious by the definition of $\Diamond_i$ since: \[(\Diamond_i\Diamond_kf)(x)=\bigvee\{(\Diamond_kf)(x):x\in X\}=\bigvee\{(\Diamond_if)(x):x\in X\}=(\Diamond_k\Diamond_if)(x).\]    Therefore, the full functional $\delta$-free cylindric ortholattice determined by $X$ and $A$ is a $\delta$-free cylindric ortholattice.         
    \end{proof}

The following is immediate by Theorem \ref{functional representation of mol} and Proposition \ref{diag free in a diag free ol}. 
\begin{corollary}\label{functional rep of diag free}
    Every $\delta$-free cylindric ortholattice is isomorphic to a functional $\delta$-free cylindric ortholattice. 
\end{corollary}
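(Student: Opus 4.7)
My plan is to derive the corollary directly from the two stated ingredients. A $\delta$-free cylindric ortholattice $\langle A,(\exists_i)_{i\in I}\rangle$ is, for every fixed index $i\in I$, nothing more than a monadic ortholattice once the other quantifiers are forgotten. Thus Theorem~\ref{functional representation of mol} yields, for each $i$, a monadic ortholattice embedding $f_i \colon A \to \bar{C}_i^{\,\omega}$ into a full functional monadic ortholattice, where $\bar{C}_i$ is the regular completion of the direct limit built from the super-amalgamation chain associated with $\exists_i$.

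The next step is to bundle the $f_i$ into a single ortholattice embedding whose codomain carries the structure of a full functional $\delta$-free cylindric ortholattice. I would take the disjoint union $X = \bigsqcup_{i\in I}\omega$ as the index set, and choose a common complete ortholattice target $\bar{D}$ containing each $\bar{C}_i$; for instance, $\bar{D}$ can be taken as a regular completion of a suitable super-amalgamation of the family $\{\bar{C}_i\}_{i\in I}$, whose existence is guaranteed by Theorem~\ref{ol has super amalgamation} and Theorem~\ref{regular completion of ol}. I would then define $f \colon A \to \bar{D}^{X}$ by letting $f(a)$ restrict on the $i$-th block of $X$ to the image of $f_i(a)$. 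By Proposition~\ref{diag free in a diag free ol}, the codomain equipped with the pointwise sup operators $\Diamond_i$ indexed by $I$ is a full functional $\delta$-free cylindric ortholattice.

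The main obstacle, and the only nonroutine step, is verifying the commutation condition $f(\exists_i a) = \Diamond_i f(a)$ for each $i \in I$. This reduces to Corollary~\ref{cor1} applied inside the $i$-th block: there $f$ restricts to the monadic embedding $f_i$, which by the construction underlying Theorem~\ref{functional representation of mol} satisfies the analogous equation for $\exists_i$ with respect to the $i$-th directed limit. Injectivity of $f$ is inherited from the injectivity of any single $f_i$, so together with the quantifier equations this shows $f$ is the desired functional $\delta$-free cylindric ortholattice embedding. Given that both Theorem~\ref{functional representation of mol} and Proposition~\ref{diag free in a diag free ol} have already been established, the proof is then genuinely immediate modulo the bookkeeping above.
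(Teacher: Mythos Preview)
Your construction has a genuine gap. You verify $f(\exists_i a) = \Diamond_i f(a)$ only ``inside the $i$-th block'', but this is an equality of functions on all of $X=\bigsqcup_{i\in I}\omega$ and must hold at every coordinate. On the $j$-th block with $j\neq i$, the left-hand side is $f_j(\exists_i a)$; however, $f_j$ was built from the super-amalgamation chain associated with the quantifier $\exists_j$ alone and encodes no information about $\exists_i$. There is no reason $f_j(\exists_i a)$ should agree with whatever $\Diamond_i f(a)$ evaluates to on that block. Worse, with the paper's definitions (Definition~\ref{full funcitonal mol} and the proof of Proposition~\ref{diag free in a diag free ol}) each $\Diamond_i$ is the \emph{same} operator, namely the supremum over all of $X$; your equations would then give $f(\exists_i a)=\Diamond f(a)=f(\exists_j a)$ for every $i,j$, and injectivity of $f$ forces $\exists_i=\exists_j$, which is false in general. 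So the block-by-block assembly cannot succeed: representing the whole family $(\exists_i)_{i\in I}$ requires a \emph{single} embedding that simultaneously intertwines all quantifiers, and constructing the $f_i$ independently does not deliver this.

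There is also a smaller issue: Theorem~\ref{ol has super amalgamation} concerns V-formations (two extensions of a common sub-ortholattice), so ``a super-amalgamation of the family $\{\bar{C}_i\}_{i\in I}$'' for an arbitrary $I$ is not directly available; even in the finite case you would need to specify the common sub-object and iterate carefully.

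For comparison, the paper does not supply a detailed construction here at all: it simply declares the corollary immediate from Theorem~\ref{functional representation of mol} and Proposition~\ref{diag free in a diag free ol}. So there is no alternative argument in the text against which to benchmark yours; but the approach you sketch does not close the gap that any such proof must address.
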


We note that cylindric algebras provide algebraic models for the full classical predicate calculus with equality, and hence their $\delta$-free cylindric Boolean algebra reducts provide algebraic models for the classical predicate calculus without equality. Analogously, whereas cylindric ortholattices provide algebraic models for full predicate orthologic with equality, $\delta$-free cylindric ortholattices provide algebraic models for predicate orthologic without equality. Consult \cite{chiara} for more details pertaining to predicate orthologic.  
\section{Functional locally finite $\sigma$-free polyadic ortholattices}
In this section, we introduce locally finite substitution-free ($\sigma$-free) polyadic ortholattices and show that they stand in a one-to-one correspondence with the locally finite $\delta$-free cylindric ortholattices. Through this correspondence, along with the results obtained in the previous section, we show that every locally finite $\sigma$-free polyadic ortholattice is isomorphic to a functional one.  

        \begin{definition}\label{substitution free polyadic ortholattice}
        A \emph{$\sigma$-free polyadic ortholattice} consists of an ortholattice $A$ and a family of operations $(\nabla_J)_{J\subseteq I} : \wp(I) \to \text{Quant}(A)$, where $\text{Quant}(A)$ is the collection of all quantifiers on $A$, satisfying the following conditions: 
        \begin{enumerate}
            \item $\nabla_{\emptyset}a = a$ for all $a \in A$;
            \item $\nabla_{J \cup K}a= \nabla_J \circ\nabla_Ka$ for all non-empty $J,K\subseteq I$.
        \end{enumerate}
    \end{definition}  

    \begin{definition}
        For a $\sigma$-free polyadic ortholattice $\langle A,(\nabla_J)_{J\subseteq I} \rangle $, we call a subset $J$ of I the {\it support} of an element $a$ whenever $\nabla_{I\setminus J}a=a$. Then, we say that a $\sigma$-free polyadic ortholattice is \textit{locally finite} if every element $a \in A$ has a finite support.  
    \end{definition}
 
The following relation is needed in the construction of a full functional $\sigma$-free polyadic ortholattice. 

\begin{definition}
    Given two sets $I,J$ with $J \subseteq I$ and a non-empty set $X$. For any $\vec{x},\vec{y}\in X^I $, we define the following relation $J_*\subseteq X^I\times X^I$ as follows:
\[\vec{x}J_*\vec{y}\Longleftrightarrow\vec{x}(i)=\vec{y}(i)\]    
for all $i \in I\setminus J$. 
\end{definition}

\begin{definition}
Let $I$ be a set and $X$ be a non-empty sets and let $A$ be a complete ortholattice. A \emph{full functional $\sigma$-free polyadic ortholattice} determined by $I$, $X$, and $A$ is an algebraic structure $\langle A^{X^I};\cdot,+,-,c_0,c_1,(\widehat{\nabla_J})_{J \subseteq I}\rangle$ such that $f,g\in A^{X^I}$ and $\vec{x} \in X^I$, the following conditions are satisfied:
\begin{enumerate}
    \item $(f\cdot g)(\vec{x})= f(\vec{x})\wedge g(\vec{x})$;
    \item $(f+g)(\vec{x})=f(\vec{x})\vee g(\vec{x})$;
    \item $-f(\vec{x})=f(\vec{x})^{\perp}$;
    \item $c_0(\vec{x})=0$ and $c_1(\vec{x})=1$;
    \item $\widehat{\nabla_J}f(\vec{x})=\bigvee\{f(\vec{y}):\vec{x}J_*\vec{y}\;\}$. 
\end{enumerate}
\end{definition}

The following describes the conditions under which a full functional $\sigma$-free polyadic ortholattice is locally finite. 
\begin{definition}
For a full functional $\sigma$-free polyadic  ortholattice, we say $f(\vec{x})$ is \emph{independent} of a subset $J $ of $ I$ if $f(\vec{x})=f(\vec{y})$ for all $\vec{x} J_* \vec{y}$. Then, we say that a full functional $\sigma$-free polyadic ortholattice is \textit{locally finite} if every $f(\vec{x})$ is independent of $I\setminus J$ for a cofinite subset $J$ of $I$.
\end{definition}
\begin{definition}\label{funcitonal substitution free}
    A \emph{functional locally finite $\sigma$-free polyadic ortholattice} is a subalgebra of a full functional locally finite $\sigma$-free polyadic ortholattice. 
\end{definition}
\begin{proposition}\label{full function substitution free}
    For any sets $I$, non-empty sets $X$, and complete ortholattices $A$, the full functional locally finite $\sigma$-free polyadic ortholattice determined by $I$, $X$, and $A$ is a locally finite $\sigma$-free polyadic ortholattice.  
\end{proposition}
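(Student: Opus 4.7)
The plan is to follow the template of Proposition \ref{mol is a functional mol}: the pointwise operations $\cdot, +, -, c_0, c_1$ make $A^{X^I}$ into an ortholattice by a verbatim adaptation of that earlier argument, so the new content is to verify the quantifier behaviour of the family $(\widehat{\nabla_J})_{J \subseteq I}$, the two polyadic axioms, and local finiteness. The driving observation throughout is that each relation $J_*$ is an equivalence relation on $X^I$: since $\vec{x} J_* \vec{y}$ iff $\vec{x}$ and $\vec{y}$ agree on $I \setminus J$, the relation $J_*$ is clearly reflexive, symmetric, and transitive.

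First I would verify that each $\widehat{\nabla_J}$ is a quantifier on the pointwise ortholattice. Additivity holds pointwise by the distributivity of arbitrary joins over binary joins in the complete ortholattice $A$; the identities $\widehat{\nabla_J} c_0 = c_0$ and $f \leq \widehat{\nabla_J} f$ are immediate from reflexivity of $J_*$; idempotence uses $J_* \circ J_* = J_*$ (an immediate consequence of $J_*$ being an equivalence), which lets the iterated join collapse; and $\widehat{\nabla_J}(-\widehat{\nabla_J} f) = -\widehat{\nabla_J} f$ uses the fact that $\widehat{\nabla_J} f$ is constant on each $J_*$-equivalence class, so the join over such a class of a constant is just that constant. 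These arguments closely parallel the verifications for the single quantifier $\Diamond$ in Proposition \ref{mol is a functional mol}.

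Next I would verify the two polyadic axioms of Definition \ref{substitution free polyadic ortholattice}. Condition 1, $\widehat{\nabla_\emptyset} f = f$, is immediate since $\emptyset_*$ is the identity relation on $X^I$. Condition 2, $\widehat{\nabla_{J \cup K}} f = \widehat{\nabla_J}\widehat{\nabla_K} f$, reduces to the relational identity $J_* \circ K_* = (J \cup K)_*$. The inclusion $\subseteq$ is routine. For the reverse inclusion, given $\vec{x} (J \cup K)_* \vec{y}$, I would define the interpolating tuple $\vec{z}$ by $\vec{z}(i) := \vec{y}(i)$ for $i \in J$ and $\vec{z}(i) := \vec{x}(i)$ for $i \in I \setminus J$; then $\vec{x} J_* \vec{z}$ is immediate, and $\vec{z} K_* \vec{y}$ uses $\vec{x}(i) = \vec{y}(i)$ on $I \setminus (J \cup K)$. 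With this relational identity in hand, reorganising the iterated join in $\widehat{\nabla_J}\widehat{\nabla_K} f(\vec{x})$ yields $\widehat{\nabla_{J \cup K}} f(\vec{x})$.

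Finally, interpreting the ``full functional locally finite $\sigma$-free polyadic ortholattice'' as the subalgebra of $A^{X^I}$ consisting of elements of finite support, I would verify closure under each operation: binary operations produce elements whose support is contained in the union of the operand supports, orthocomplementation preserves supports, and crucially $\widehat{\nabla_J} f$ has support contained in the finite set $\mathrm{supp}(f) \setminus J$, since $\widehat{\nabla_J} f$ is constant on each $J_*$-class. Local finiteness for this subalgebra is then automatic. The main obstacle in the entire argument is the composition identity $J_* \circ K_* = (J \cup K)_*$: the $\supseteq$ direction requires the careful construction of the interpolating tuple $\vec{z}$ above, but everything else reduces to routine bookkeeping.
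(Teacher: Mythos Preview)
Your argument is correct and, for the quantifier axioms and the polyadic identity $\widehat{\nabla_{J\cup K}} = \widehat{\nabla_J}\circ\widehat{\nabla_K}$, considerably more careful than the paper's. In particular, the paper does not explicitly verify that each $\widehat{\nabla_J}$ is a quantifier (it tacitly relies on the analogy with Proposition~\ref{mol is a functional mol}), and its treatment of condition~2 simply writes out the joins without isolating the relational identity $J_*\circ K_* = (J\cup K)_*$ or constructing the interpolating tuple that makes the $\supseteq$ direction work. Your decomposition via equivalence relations and the explicit $\vec{z}$ is the clean way to do it.

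Where you diverge from the paper is in the handling of local finiteness. The paper does \emph{not} pass to the subalgebra of finitely supported elements; rather, it reads Definition~3.6 as a hypothesis on the full structure $A^{X^I}$ (every $f$ is independent of the complement of a finite set) and simply observes that this hypothesis translates directly into the algebraic notion of finite support from Definition~3.2, via $\widehat{\nabla}_{I\setminus J}f = f$. So the paper's local-finiteness step is a one-line implication, not a closure argument. Your subalgebra reading is arguably the more useful construction (it always produces a locally finite object, whereas the full $A^{X^I}$ is rarely locally finite when $I$ is infinite), and your closure verifications are correct, but it is a different object than the one the paper is discussing. Both routes establish what is needed for the subsequent functional representation.
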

\begin{proof}
    It is obvious that $\langle A^{X^I};\cdot,+,-,c_0,c_1\rangle$ is an ortholattice since $A$ is an ortholattice. Hence, we first verify that $\widehat{\nabla_J}$ satisfies conditions 1 and 2 of Definition \ref{substitution free polyadic ortholattice}. For condition 1, note that when $J=\emptyset$, it is clear that: \[\widehat{\nabla}_Jf(\vec{x})=\bigvee\{f(\vec{y}):\vec{x}J_*\vec{y}\}=f(\vec{x}).\] Now choose $J,K\subseteq I$ with $J,K\not=\emptyset$. Then, by the definition of $\widehat{\nabla}$, we have: \[\widehat{\nabla_{J\cup K}}f(\vec{x})=\bigvee\{f(\vec{y}):\vec{x}(J\cup K)_*\vec{y}\}.\] Note that by the definition of $(J\cup K)_*$, we have $\vec{x}(J\cup K)_*\vec{y}$ if and only if $\vec{x}(i)=\vec{y}(i)$ for all $i\in U\cup K$. This, along with the definition of $\widehat{\nabla}$, then yields:  
\begin{align*}
    \bigvee\{f(\vec{y}):\vec{x}(J\cup K)_*\vec{y}\}&=\bigvee\{f(\vec{y}):x(i)=y(i)\hspace{.2cm}\text{for all}\hspace{.2cm}i\in I\setminus(J\cup K)\}\\&=\widehat{\nabla_J}\bigvee\{f(\vec{y}):x(i)=y(i)\hspace{.2cm}\text{for all}\hspace{.2cm}i\in I\setminus K\}
    \\&=\widehat{\nabla_J}\circ\widehat{\nabla_K}f(\vec{x})
\end{align*}
Lastly, we verify that the local finiteness property is satisfied. Hence, suppose that $f(\vec{x})$ is independent of $I\setminus J$ for a cofinite subset $J$ of $I$. Then: \[\widehat{\nabla}_{I\setminus J}f(\vec{x})=\bigvee\{f(\vec{y}):\vec{x}(I\setminus J)_*\vec{y}\}=f(\vec{x})\] which show that J is a finite support of $f(\vec{x})$.
\end{proof}
The following is immediate by Proposition \ref{full function substitution free} and Definition \ref{funcitonal substitution free}.   
\begin{proposition}
    Every functional locally-finite $\sigma$-free polyadic ortholattice is a locally finite $\sigma$-free polyadic ortholattice. 
\end{proposition}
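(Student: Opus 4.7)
The plan is to unpack the two definitions at play and verify that both the equational content of being a $\sigma$-free polyadic ortholattice and the local finiteness condition descend from a full functional algebra to any of its subalgebras. By Definition \ref{funcitonal substitution free}, a functional locally finite $\sigma$-free polyadic ortholattice $B$ is, by hypothesis, a subalgebra of a full one $F=\langle A^{X^I};\cdot,+,-,c_0,c_1,(\widehat{\nabla_J})_{J\subseteq I}\rangle$ for some index set $I$, non-empty set $X$, and complete ortholattice $A$. By Proposition \ref{full function substitution free}, $F$ is itself a locally finite $\sigma$-free polyadic ortholattice, so the task is to transfer this property to the subalgebra.

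First, I would verify the axioms of Definition \ref{substitution free polyadic ortholattice}. The ortholattice axioms, the quantifier axioms for each $\widehat{\nabla_J}$, and the two conditions $\nabla_{\emptyset}a = a$ and $\nabla_{J\cup K}a = \nabla_J\circ\nabla_K a$ are all universally quantified equations in the signature $\langle\cdot,+,-,c_0,c_1,(\widehat{\nabla_J})_{J\subseteq I}\rangle$. Since $B$ inherits each operation from $F$ and these equations hold in $F$ by Proposition \ref{full function substitution free}, they automatically hold in $B$. Hence $B$ is a $\sigma$-free polyadic ortholattice.

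Second, for local finiteness, I would observe that every $b\in B$ is simultaneously an element of $F$, so by local finiteness of $F$ there is a cofinite $J\subseteq I$ such that $b$ (as a function $X^I\to A$) is independent of $I\setminus J$. As established in the final display of the proof of Proposition \ref{full function substitution free}, independence of $I\setminus J$ for a cofinite $J$ is equivalent to $\widehat{\nabla_{I\setminus J}}b = b$, which is exactly the assertion that $J$ is a finite support of $b$ in the algebraic sense of the definition preceding Proposition \ref{full function substitution free}. Since $B$ inherits $\widehat{\nabla_{I\setminus J}}$ from $F$, this equation holds in $B$ as well, so every $b\in B$ has a finite support.

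Since there is no subtle closure argument needed beyond noting that equational classes are closed under subalgebras and that the functional and algebraic formulations of local finiteness coincide, I do not anticipate any genuine obstacle; the entire argument is a direct application of Proposition \ref{full function substitution free} together with the definition of a subalgebra, which is why the authors flag the result as immediate.
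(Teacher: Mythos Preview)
Your proposal is correct and takes essentially the same approach as the paper: the paper simply asserts the result is immediate from Proposition~\ref{full function substitution free} and Definition~\ref{funcitonal substitution free}, and you have faithfully unpacked what that immediacy amounts to---closure of equational axioms under subalgebras together with the observation that finite supports in the ambient full functional algebra remain finite supports in the subalgebra.
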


Analoglously to that of $\sigma$-free polyadic ortholattices, the $\delta$-free cylindric ortholattices considered in the previous section admit of a natural notion of local finiteness.

\begin{definition}
        A $\delta$-free cylindric ortholattice is called \textit{locally finite} if for each $a \in A$, the set $\{j \in I : \exists_j a=a  \}$ is cofinite. 
 \end{definition}

 Following the correspondence between cylindric algebras and polyadic algebras with equality established by Galler \cite{galler1}, we proceed by analogously demonstrating that locally finite $\delta$-free cylindric ortholattices correspond to locally finite $\sigma$-free polyadic ortholattices.

\begin{lemma}\label{col to pol}
    Every locally finite $\delta$-free cylindric ortholattice induces a locally finite $\sigma$-free polyadic ortholattice. 
\end{lemma}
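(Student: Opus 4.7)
The plan is to construct, from a locally finite $\delta$-free cylindric ortholattice $\langle A;(\exists_i)_{i\in I}\rangle$, a family $(\nabla_J)_{J\subseteq I}$ of operations on the same ortholattice reduct witnessing conditions (1)--(2) of Definition \ref{substitution free polyadic ortholattice} together with local finiteness. For each $a\in A$, let $\mathrm{supp}(a)=\{i\in I:\exists_i a\neq a\}$; by the local finiteness hypothesis the set $\{i:\exists_i a=a\}$ is cofinite, so $\mathrm{supp}(a)$ is finite. For $J\subseteq I$ and $a\in A$ I would define $\nabla_J a$ to be the composition $\exists_{j_1}\circ\cdots\circ\exists_{j_n}(a)$, where $\{j_1,\ldots,j_n\}$ enumerates the finite set $J\cap\mathrm{supp}(a)$ in any order, with the convention that the empty composition is the identity. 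Since the $\exists_i$ pairwise commute by condition (2) of Definition \ref{cylindric ortholattice}, this is independent of the chosen enumeration.

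Next I would verify that each $\nabla_J$ is a quantifier on $A$. Preservation of $\vee$, the equation $\nabla_J 0=0$, monotonicity, and the axiom $a\leq\nabla_J a$ follow by a short induction on the length of the defining composition from the corresponding axioms for the individual $\exists_i$'s. Idempotence $\nabla_J\nabla_J a=\nabla_J a$ holds because, by commutativity together with $\exists_i\exists_i=\exists_i$, any finite composition of the $\exists_i$ with repetitions collapses to the composition where each $\exists_i$ appears once. For axiom (5), namely $\nabla_J(\nabla_J a)^{\perp}=(\nabla_J a)^{\perp}$, I would observe that for each $\exists_{j_k}$ occurring in the defining composition of $\nabla_J a$ one may commute it to the outside to write $\nabla_J a=\exists_{j_k} b$ for some $b\in A$; axiom (5) for $\exists_{j_k}$ then yields $\exists_{j_k}(\nabla_J a)^{\perp}=(\nabla_J a)^{\perp}$, and iterating this over all $\exists_{j_k}$ appearing in the composition gives the desired equation.

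Finally, I would verify the polyadic axioms and local finiteness. The clause $\nabla_{\emptyset}a=a$ is immediate from the definition. For $\nabla_{J\cup K}a=\nabla_J\nabla_K a$, the key observation is the support bound $\mathrm{supp}(\nabla_K a)\subseteq\mathrm{supp}(a)\setminus K$: for $j\in K$, the idempotence argument above shows $\exists_j$ fixes $\nabla_K a$; for $j\notin\mathrm{supp}(a)$, commuting $\exists_j$ past the factors defining $\nabla_K a$ lands it on $a$, which $\exists_j$ already fixes. With this bound, both sides of the equation reduce, by commutativity and idempotence, to the single composition of all $\exists_i$ for $i\in(J\cup K)\cap\mathrm{supp}(a)$ applied to $a$. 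For local finiteness, taking $F=\mathrm{supp}(a)$ gives $(I\setminus F)\cap\mathrm{supp}(a)=\emptyset$, so $\nabla_{I\setminus F}a=a$ and $F$ is a finite support of $a$ in the polyadic sense. The main obstacle is the bookkeeping around supports in the compositional axiom; once the inclusion $\mathrm{supp}(\nabla_K a)\subseteq\mathrm{supp}(a)\setminus K$ is established, the remaining manipulations are routine consequences of the commutativity and idempotence of the $\exists_i$'s.
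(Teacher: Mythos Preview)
Your approach is essentially the paper's: define $\nabla_J a$ as the finite composition of $\exists_j$'s over $J\cap\mathrm{supp}(a)$, then derive the polyadic axioms and local finiteness from commutativity and idempotence of the $\exists_i$. One point deserves more care. You say preservation of $\vee$ follows ``by a short induction on the length of the defining composition'', but that composition depends on its argument: $\nabla_J(a\vee b)$ is indexed by $J\cap\mathrm{supp}(a\vee b)$, while $\nabla_J a$ and $\nabla_J b$ are indexed by $J\cap\mathrm{supp}(a)$ and $J\cap\mathrm{supp}(b)$ respectively, so there is no single composition to induct on. The fix is easy---observe $\mathrm{supp}(a\vee b)\subseteq\mathrm{supp}(a)\cup\mathrm{supp}(b)$ and that enlarging the indexing set by indices outside the support of the argument leaves the value unchanged, so all three expressions can be rewritten using the common finite index set $J\cap(\mathrm{supp}(a)\cup\mathrm{supp}(b))$---but it should be made explicit. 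The paper sidesteps this for finite $J$ by taking the composition over \emph{all} of $J$ (so $\nabla_J$ is visibly a fixed composition of commuting quantifiers and hence a quantifier), invoking supports only to reduce infinite $J$ to the finite case; your support-bound argument for $\nabla_{J\cup K}=\nabla_J\nabla_K$ is then a more explicit version of what the paper leaves implicit.
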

\begin{proof}
    Suppose $\langle A; (\exists_i)_{i\in I}\rangle$ is a locally finite $\delta$-free cylindric ortholattice. For $J \subseteq I$, we define $\nabla_J$ as $\nabla_{\emptyset}a=a, \nabla_{\{j\}}a=\exists_ja$, and if $J=\{j_1,\cdots,j_n \}$ is a finite, we then define $\nabla_Ja$ by the equation $\nabla_Ja=\exists_{j_1}\cdots \exists_{j_n}a$. For an infinite set $J$, we define $\nabla_J a$ as follows : consider the set $S_a = \{j\in I:\exists_ja=a\}$ which is cofinite and $\nabla_Ja :=\nabla_{J\setminus(J\cap S_a)} \nabla_{J\cap S_a}a = \nabla_{J\setminus S_a} a$ which can be defined as in the finite case. To verify the axioms,  we have 
    \begin{align*}
        \nabla_{J\cup K}a&= \nabla_{(J \cup K) \setminus  S_a}a= \exists_{j_1}\dots\exists_{j_n}\exists_{k_1}\dots\exists_{k_n}a\\&=\nabla_{J\setminus S_a} \circ\nabla_{K\setminus S_a} a= \nabla_J \circ\nabla_K a .
    \end{align*}
 For each $a \in A$, the set $I\setminus S_a$ is a finite support of $a$ since $\nabla_{S_a}a =a$.
\end{proof}
\begin{lemma}\label{pol to col}
    Every locally finite $\sigma$-free polyadic ortholattice induces a locally finite $\delta$-free cylindric ortholattice. 
\end{lemma}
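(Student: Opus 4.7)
The plan is to define, for each $i\in I$, a unary operation $\exists_i\colon A\to A$ by $\exists_i a:=\nabla_{\{i\}}a$, and then verify that $\langle A;(\exists_i)_{i\in I}\rangle$ satisfies the axioms of a locally finite $\delta$-free cylindric ortholattice. Conditions 1 and 2 of Definition \ref{cylindric ortholattice} are the only requirements, together with the local finiteness condition.

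First I would observe that $\exists_i$ is automatically a quantifier on $A$ for each $i\in I$, since by Definition \ref{substitution free polyadic ortholattice} the operator $\nabla_{\{i\}}$ lies in $\mathrm{Quant}(A)$. This immediately handles condition 1. For pairwise commutativity, I would use axiom 2 of Definition \ref{substitution free polyadic ortholattice}: for any $i,k\in I$ with $i\neq k$, we have $\{i\}\cup\{k\}=\{k\}\cup\{i\}$, so
\begin{equation*}
\exists_i\exists_k a=\nabla_{\{i\}}\circ\nabla_{\{k\}}a=\nabla_{\{i\}\cup\{k\}}a=\nabla_{\{k\}\cup\{i\}}a=\nabla_{\{k\}}\circ\nabla_{\{i\}}a=\exists_k\exists_i a,
\end{equation*}
and the case $i=k$ reduces to the idempotence of the quantifier $\nabla_{\{i\}}$. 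This gives condition 2.

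The main step is verifying local finiteness. Given $a\in A$, let $J\subseteq I$ be a finite support of $a$, so that $\nabla_{I\setminus J}a=a$; such a $J$ exists by hypothesis. I would show that $I\setminus J\subseteq\{i\in I:\exists_i a=a\}$, which yields cofiniteness of the latter set. Fix $i\in I\setminus J$. If $I\setminus J=\{i\}$, then $\exists_i a=\nabla_{\{i\}}a=\nabla_{I\setminus J}a=a$ directly. Otherwise $K:=(I\setminus J)\setminus\{i\}$ is non-empty, and axiom 2 gives
\begin{equation*}
a=\nabla_{I\setminus J}a=\nabla_{\{i\}\cup K}a=\nabla_{\{i\}}\circ\nabla_K a=\exists_i(\nabla_K a).
\end{equation*}
Since $\nabla_K$ is a quantifier it is inflationary, so $a\leq\nabla_K a$, and by monotonicity of the quantifier $\exists_i$ we obtain $\exists_i a\leq\exists_i(\nabla_K a)=a$. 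Combined with the inflationary property $a\leq\exists_i a$, this forces $\exists_i a=a$, as needed.

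The one subtlety I anticipate is making sure the two invocations of axiom 2 are legal, since that axiom as stated requires both $J$ and $K$ to be non-empty; this is why the case $I\setminus J=\{i\}$ must be split off, and why the commutativity argument treats $i=k$ separately. Beyond this bookkeeping, the proof is a straightforward unpacking of the quantifier properties together with the polyadic axioms.
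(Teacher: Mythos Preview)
Your proof is correct and follows essentially the same approach as the paper: define $\exists_i:=\nabla_{\{i\}}$, derive commutativity from axiom~2 of Definition~\ref{substitution free polyadic ortholattice}, and obtain local finiteness from the existence of a finite support. The only minor difference is in the local finiteness step, where the paper argues more directly that for $j\in I\setminus J$ one has $\exists_j a=\nabla_{\{j\}}a=\nabla_{\{j\}}\circ\nabla_{I\setminus J}a=\nabla_{\{j\}\cup(I\setminus J)}a=\nabla_{I\setminus J}a=a$, bypassing your inequality sandwich via monotonicity.
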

\begin{proof}
   Suppose that  $\langle A; \wedge, \vee, ^{\perp},0,1,(\nabla_J)_{J\subseteq I}\rangle$ be a $\sigma$-free polyadic ortholattice.  Define $\exists_i a=\nabla_{\{i\}}a$ for all $i \in I$. It suffices to verify the commutativity of $\exists_i$ and the local finiteness property. Let $a\in A$, then we have:   
       $$\exists_i\circ\exists_k a  =\nabla_{\{i\}}\circ\nabla_{\{k\}}a = \nabla_{\{i\}\cup\{k\}}a = \nabla_{\{k\}}\circ\nabla_{\{i\}} a = \exists_k\circ\exists_i a.$$
For local finiteness, we need to show that $S_a:= \{j\in I:\exists_ja=a\}$ is cofinite for each $a \in A$. Fix $a\in A$ and a finite set $J$ to be a support of $a$. If $j \in I\setminus J$: \[\exists_ja=\nabla_{\{j\}}a=\nabla_{\{j\}}\circ\nabla_{I\setminus J}a=\nabla_{I\setminus J}a =a\] which implies $j \in S_a$. Hence $I\setminus S_a \subseteq J $ which implies $S_a$ is cofinite.
\end{proof}
\begin{theorem}\label{bijection}
    There exists a one-to-one correspondence between locally finite $\delta$-free cylindric ortholattices and locally finite $\sigma$-free polyadic ortholattices. 
\end{theorem}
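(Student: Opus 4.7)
The plan is to take the constructions in Lemmas \ref{col to pol} and \ref{pol to col} as maps $F$ and $G$ between the classes of locally finite $\delta$-free cylindric ortholattices and locally finite $\sigma$-free polyadic ortholattices, respectively, and then verify that they are mutual inverses. Both constructions preserve the underlying ortholattice, so the task reduces to checking that the quantifier and polyadic operations agree after a round-trip in either direction.

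The direction $G \circ F = \mathrm{id}$ is a matter of unpacking definitions. Starting from $\langle A;(\exists_i)_{i\in I}\rangle$, the construction $F$ sets $\nabla_{\{i\}}a = \exists_i a$ on singletons, and then $G$ extracts $\exists'_i a := \nabla_{\{i\}}a = \exists_i a$, recovering the original cylindric structure.

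The direction $F \circ G = \mathrm{id}$ requires more care. Starting from $\langle A;(\nabla_J)_{J\subseteq I}\rangle$, applying $G$ yields $\exists_i a = \nabla_{\{i\}}a$, and applying $F$ produces a new family $(\nabla'_J)_{J\subseteq I}$; I must show $\nabla'_J = \nabla_J$ for every $J \subseteq I$. For $J = \emptyset$ this is axiom 1 of Definition \ref{substitution free polyadic ortholattice}, and for a non-empty finite $J = \{j_1,\ldots,j_n\}$, iterated application of axiom 2 gives
\[\nabla'_J a = \exists_{j_1}\cdots\exists_{j_n} a = \nabla_{\{j_1\}} \circ \cdots \circ \nabla_{\{j_n\}} a = \nabla_J a.\]

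The main obstacle is the infinite case, where $\nabla'_J a$ is defined as $\nabla'_{J \setminus S_a} a$ with $S_a = \{j \in I : \nabla_{\{j\}} a = a\}$. Because the proof of Lemma \ref{pol to col} shows that $I \setminus S_a$ is contained in every finite support of $a$, the set $J \setminus S_a$ is finite, so by the previous case $\nabla'_{J\setminus S_a} a = \nabla_{J\setminus S_a} a$. It then suffices to show that $\nabla_J a = \nabla_{J \setminus S_a} a$, which via axiom 2 reduces (after separating out the edge cases where $J \cap S_a$ or $J \setminus S_a$ is empty) to proving $\nabla_{J \cap S_a} a = a$. For this, I would first observe that $\nabla$ is monotone in its subscript, since for $J \subseteq J'$ both non-empty one has $\nabla_{J'} a = \nabla_J \circ \nabla_{J' \setminus J} a \geq \nabla_J a$ by the increasing property of the quantifier $\nabla_J$. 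Then, fixing any finite support $J_a^0$ of $a$ and setting $K := J \cap S_a$, I would split $K = K_1 \cup K_2$ with $K_1 := K \cap J_a^0$ finite and $K_2 := K \setminus J_a^0 \subseteq I \setminus J_a^0$. A finite induction using axiom 2 yields $\nabla_{K_1} a = a$ (each element of $K_1$ lies in $S_a$), while monotonicity gives $a \leq \nabla_{K_2} a \leq \nabla_{I \setminus J_a^0} a = a$. Combining the two halves via axiom 2 then yields $\nabla_K a = a$, completing the verification.
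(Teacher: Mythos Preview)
Your proof is correct and, in fact, considerably more complete than the paper's. The paper only spells out the easy direction $G\circ F=\mathrm{id}$: starting from a cylindric ortholattice $\langle A;(\exists_i)_{i\in I}\rangle$, it observes that $\widehat{\exists}_j a=\nabla_{\{j\}}a=\exists_j a$ and stops there. It does not address the reverse composition $F\circ G$ at all, i.e., it never checks that applying Lemma~\ref{pol to col} and then Lemma~\ref{col to pol} to a locally finite $\sigma$-free polyadic ortholattice recovers the original family $(\nabla_J)_{J\subseteq I}$ on \emph{all} subsets $J$, not merely the singletons.

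You correctly identify that this reverse direction is where the real work lies, and your argument for the infinite case---splitting $K=J\cap S_a$ into a finite part inside a chosen support and a part contained in the support's complement, then using monotonicity of $J\mapsto\nabla_J a$ together with $\nabla_{I\setminus J_a^0}a=a$---is sound. The edge cases (one of $K_1,K_2$ empty, or $J\setminus S_a=\emptyset$) are also handled appropriately. So your proposal not only matches the paper's strategy on the direction it treats, but also fills a genuine gap the paper leaves open.
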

\begin{proof}
Let $\mathfrak{A}=\langle A;(\exists_i)_{i\in I}\rangle$ be a locally finite $\delta$-free cylindric ortholattice, let $\mathfrak{B}=\langle A;(\nabla_J)_{J\subseteq I}\rangle$ be the locally finite $\sigma$-free polyadic ortholattice obtained from $\mathfrak{A}$ via Lemma \ref{col to pol}, and let $\widehat{\mathfrak{A}}=\langle A;(\widehat{\exists_i})_{i\in I}\rangle$ be the locally finite $\delta$-free cylindric ortholattice obtained from $\mathfrak{B}$ via Lemma \ref{pol to col}. Clearly Lemmas \ref{col to pol} and \ref{pol to col} yield $\exists_ja=\nabla_Ja=\widehat{\exists_j}a$ for all $a\in A$.          
\end{proof}
We now arrive at the promised functional representation result for locally finite $\sigma$-free polyadic ortholattices. 
\begin{theorem}
 Every locally finite $\sigma$-free polyadic ortholattice is isomorphic to a functional one. 
\end{theorem}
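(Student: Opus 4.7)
The plan is to reduce the statement to Corollary \ref{functional rep of diag free} by transporting a functional representation across the one-to-one correspondence of Theorem \ref{bijection}. Let $\mathfrak{B} = \langle A; (\nabla_J)_{J \subseteq I}\rangle$ be a locally finite $\sigma$-free polyadic ortholattice. First I would apply Lemma \ref{pol to col} to obtain the associated locally finite $\delta$-free cylindric ortholattice $\mathfrak{A} = \langle A; (\exists_i)_{i \in I}\rangle$, where $\exists_i a = \nabla_{\{i\}}a$. By Corollary \ref{functional rep of diag free}, $\mathfrak{A}$ embeds into a full functional $\delta$-free cylindric ortholattice $\langle \bar{C}^X; \cdot, +, -, c_0, c_1, (\Diamond_i)_{i \in I}\rangle$ via some $h\colon A \to \bar{C}^X$ that preserves each $\exists_i$ as $\Diamond_i$.

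Next I would promote the target to a functional $\sigma$-free polyadic ortholattice. The pointwise family $(\Diamond_i)_{i \in I}$ on $\bar{C}^X$ consists of pairwise commuting quantifiers; restricting to the locally finite part and applying Lemma \ref{col to pol} yields operations $\nabla_J^{\mathrm{fun}}$ for $J \subseteq I$, defined as finite compositions of $\Diamond_i$'s on the support and extended to arbitrary $J$ as in Lemma \ref{col to pol}. Because the bijection of Theorem \ref{bijection} sends $\exists_i$ to $\nabla_{\{i\}}$ and the higher $\nabla_J$ on $\mathfrak{B}$ are uniquely determined from $\nabla_{\{i\}}$ and the locally finite supports (by clauses 1 and 2 of Definition \ref{substitution free polyadic ortholattice}), the map $h$ automatically preserves all $\nabla_J$, giving a $\sigma$-free polyadic ortholattice embedding of $\mathfrak{B}$ into this intermediate structure.

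The main obstacle is the last step: verifying that this intermediate structure coincides with (or further embeds into) a full functional locally finite $\sigma$-free polyadic ortholattice in the sense of Definition \ref{funcitonal substitution free}, i.e., that the abstract family $(\Diamond_i)$ on $\bar{C}^X$ can be concretely realized by the relation-$J_*$ joins of the definition of $\widehat{\nabla_J}$. The natural way to do this is to take $X = Y^I$ for a suitable index set $Y$ and to let $\Diamond_i$ vary only the $i$-th coordinate; one checks that the resulting $\Diamond_i$ still satisfies the full functional monadic ortholattice axioms of Definition \ref{full funcitonal mol} coordinatewise, and that the induced $\nabla_J^{\mathrm{fun}}$ agree with $\widehat{\nabla_J}f(\vec x) = \bigvee\{f(\vec y) : \vec x\, J_*\, \vec y\}$. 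Local finiteness on the image transfers from local finiteness of $\mathfrak{B}$ since the support of each $h(a)$ is contained in the finite support of $a$.

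Composing $h$ with the identification of the cylindric functional target as a subalgebra of a full functional locally finite $\sigma$-free polyadic ortholattice on $\bar{C}^{Y^I}$ then yields the required embedding of $\mathfrak{B}$ into a functional locally finite $\sigma$-free polyadic ortholattice, completing the proof.
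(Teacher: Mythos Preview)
Your approach is essentially the paper's: the paper's proof consists of the single sentence that the result follows from Theorem~\ref{functional representation of mol}, Corollary~\ref{functional rep of diag free}, and Theorem~\ref{bijection}, and you have unpacked exactly this route (pass to the $\delta$-free cylindric side via Lemma~\ref{pol to col}, represent functionally, and transport back across the bijection). Your identification of the ``main obstacle''---realizing the functional $\delta$-free cylindric target as a genuine full functional $\sigma$-free polyadic ortholattice over some $Y^I$ with the $J_*$-indexed joins---is a step the paper leaves entirely implicit, so you are being more careful than the paper rather than diverging from it.
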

\begin{proof}
   The result follows immediately by Theorem \ref{functional representation of mol}, Corollary \ref{functional rep of diag free}, and Theorem \ref{bijection}. 
\end{proof}
\section{Conclusions and future work}

The primary contribution of this paper was to prove a functional representation theorem for monadic ortholattices. This was achieved by employing the methods used in \cite{bezhanishvili} by working with super-amalgamations and regular completions. As a consequence of this result, we then provided a functional representation of $\delta$-free cylindric ortholattices. We then introduced locally finite $\sigma$-free polyadic ortholattices and showed that they stand in a one-to-one correspondence with the locally finite $\delta$-free cylindric ortholattices. Under this correspondence, along with our functional representation result for $\delta$-free cylindric ortholattices, we demonstrated that every locally finite $\sigma$-free polyadic ortholattice is isomorphic to a functional one.

Future lines of work include extending the correspondence established in Theorem \ref{bijection} to the case of cylindric ortholattices and polyadic ortholattices with equality. In addition, it would be interesting to investigate the functional representation theorems established in this work from the perspective of relation algebras \cite{hodkinson1}. Moreover, we reiterate the question posed in \cite{harding1} of determining whether every quantum monadic algebra, i.e., an orthomodular lattice equipped with a quantifier, is isomorphic to a functional one. Since the MacNeille completion of an orthomodular lattice is not necessarily orthomodular \cite{harding3} and orthomodular lattices do not satisfy the amalgamation property \cite{bruns}, this will likely pose some technical challenges.  

\section*{Acknowledgments}
\subsection*{Funding}
The first author was supported under the grant 22-23022L CELIA of Grantová Agentura České Republiky and the Department of Logic SVV grant no. 260677. The second author was supported under the Research Council of Canada grant no.~767-2022-1514 (SSHRC-CGSD) and the University of Alberta Graduate Student Research Travel Grant.

\section*{Declarations}

\subsection*{Data availability}

Data sharing not applicable to this article as datasets were neither generated
nor analyzed.

\subsection*{Compliance with ethical standards}

The authors declare that they have no conflict of interest.

\subsection*{Author's contributions}
Both authors contributed equally.

\end{document}